\numberwithin{equation}{section}
\newtheorem{Theorem}{Theorem}[section]
\newtheorem*{Theorem*}{Theorem}
\newtheorem{Corollary}[Theorem]{Corollary}
\newtheorem{Lemma}[Theorem]{Lemma}
\newtheorem{Proposition}[Theorem]{Proposition}
 { \theoremstyle{definition}
\newtheorem{Definition}[Theorem]{Definition}

\newtheorem{Remark}[Theorem]{Remark} }
\begin{document}
\allowdisplaybreaks

\newcommand{\arXivNumber}{2210.06024}

\renewcommand{\PaperNumber}{072}

\FirstPageHeading

\ShortArticleName{Multiplicative Characters and Gaussian Fluctuation Limits}

\ArticleName{Multiplicative Characters\\ and Gaussian Fluctuation Limits}

\Author{Ryosuke SATO}
\AuthorNameForHeading{R.~Sato}

\Address{Department of Physics, Faculty of Science and Engineering, Chuo University,\\ Kasuga, Bunkyo-ku, Tokyo 112-8551, Japan}
\Email{\href{mailto:r.sato@phys.chuo-u.ac.jps}{r.sato@phys.chuo-u.ac.jp}}

\ArticleDates{Received May 09, 2023, in final form September 19, 2023; Published online October 03, 2023}

\Abstract{It is known that extreme characters of several inductive limits of compact groups exhibit multiplicativity in a certain sense. In the paper, we formulate such multiplicativity for inductive limit quantum groups and provide explicit examples of multiplicative characters in the case of quantum unitary groups. Furthermore, we show a~Gaussian fluctuation limit theorem using this concept of multiplicativity.}

\Keywords{asymptotic representation theory; quantum groups; inductive limits; quasi-local algebras}

\Classification{17B37; 22D25; 22E66; 46L53; 46L67; 60F05}

\section{Introduction}
Asymptotic representation theory initiated by Vershik and Kerov is a unitary representation theory of inductive limits of compact groups and a significant framework to study characters of such groups (see \cite{VK81-2,VK81,VK82} etc.). For instance, using Vershik--Kerov's idea, we can describe complete lists of the extreme characters of various inductive limit groups. In particular, the characters of the infinite symmetric group ${\rm S}(\infty)$ and the infinite-dimensional unitary group~${\rm U}(\infty)$ are well studied and have intimate connections with many branches of mathematics (see \cite{BO05, BO12, BO12-2, BO13, Boyer83, Kerov:book, KOV04, Petrov14} etc.). One of the features of extreme characters of ${\rm S}(\infty)$,~${\rm U}(\infty)$ is that they are \emph{multiplicative} (see Section \ref{sect:mult}). Voiculescu \cite{Voiculescu74} first noticed this fact for ${\rm U}(\infty)$ based on the theory of type I\hspace{-0.1em}I${}_1$ factors. After that, Vershik and Kerov showed that the extreme characters of ${\rm S}(\infty)$ are also multiplicative. In Vershik--Kerov's work \cite{KV80,VK85,VK87,KV90}, the dimension group (i.e., $K_0$-group) of the group $C^*$-algebra $C^*({\rm S}(\infty))$ plays an important role. More precisely, there exists a bijection between the characters of ${\rm S}(\infty)$ and the \emph{states} of the dimension group, and the states corresponding to extreme characters are multiplicative, where the dimension group admits a natural ring structure. Moreover, this fact implies that the extreme characters of~${\rm S}(\infty)$ are multiplicative. However, inductive limits of compact groups are not generally locally compact, and hence the meaning of their group $C^*$-algebra are unclear. Therefore, one can not directly extend Vershik--Kerov's idea to general settings. Nevertheless, Boyer \cite{Boyer87} introduced an appropriate ring for an inductive system of compact groups. For ${\rm U}(\infty)$, Olshanski \cite{O16} studied such a ring, called the \emph{representation ring}, based on the theory of symmetric functions. The representation ring plays the same role as the dimension group for ${\rm S}(\infty)$, and one can show that the extreme characters of ${\rm U}(\infty)$ are multiplicative by Vershik--Kerov's approach.

The author \cite{Sato1,Sato3} initiated the asymptotic representation theory for quantum groups. Our formulation of the theory, among other things, fits Gorin's study of the $q$-deformed Gelfand--Tsetlin graph in \cite{G12}. Thus, based on Gorin's work, one can obtain a complete list of the extreme \emph{quantized} characters of the infinite-dimensional \emph{quantum} unitary group ${\rm U}_q(\infty)$, where ${\rm U}_q(\infty)$ is the inductive limit of the quantum unitary groups ${\rm U}_q(N)$.

In the paper, we will proceed with investigating quantized characters of ${\rm U}_q(\infty)$ and particularly study their multiplicativity (in the sense of Definition \ref{def:multiplicative}). Recently, Ueda \cite{Ueda22,Ueda20} extended the notion of the representation ring to more general settings. Thus, based on his idea, one can study the multiplicativity of quantized characters similarly to the classical case. In particular, we give concrete examples of multiplicative quantized characters of ${\rm U}_q(\infty)$.

We will also give an application of multiplicative characters to probability theory. As mentioned above, the character theory of ${\rm U}(\infty)$ relates to various branches of mathematics. For instance, Borodin and Bufetov discovered that specific characters of ${\rm U}(\infty)$, called one-sided Plancherel characters, produce Gaussian fields (see \cite{BB12,BB14}). Their work gives a type of central limit theorem on the universal enveloping algebras of ${\rm U}(N)$. In the paper, we show a~central limit theorem on the \emph{group von Neumann algebras} of ${\rm U}(N)$ (see Corollary \ref{cor:cha}). Our result is a new type of Gaussian limit theorems in asymptotic representation theory, but our idea is essentially the same as in studies of a central limit theorem for quantum spin chains (see~\cite{D,GVV89,GVV90,GVV91,GV89,Matsui}). Similar to those work, fluctuations of non-commutative random variables are described by \emph{quasi-local algebras}, and we can obtain an explicit algebraic description for the Gaussian limits of those fluctuations using \emph{Weyl CCR algebras} and \emph{quasi-free states}.

The organization of the paper is the following: In Section \ref{sect:2}, we discuss KMS states on a quasi-local algebras with a flow. The main theorem here gives a necessary and sufficient condition that KMS states become multiplicative in the sense of Definition \ref{def:multiplicative} (see Theorem \ref{thm:main1}). If a quasi-local algebra additionally has \emph{shift translations} (see Section \ref{subsect:shift}), then we introduce a~representation ring of the quasi-local algebras. In Theorem \ref{thm:main2}, multiplicative KMS states are characterized in terms of this representation ring. In Section \ref{sect:3}, we apply the general theory in Section \ref{sect:2} to the quasi-local algebra given by quantum unitary groups. As a result, we give concrete examples of multiplicative quantized characters of ${\rm U}_q(\infty)$ (see Proposition \ref{prop:main3}). In Section \ref{sect:4}, we give a Gaussian fluctuation limit theorem with respect to multiplicative characters of ${\rm U}(\infty)$ (see Theorem \ref{theorem:gaussian}, Corollary \ref{cor:cha}).

\section{Multiplicative states on quasi-local algebras}\label{sect:2}
\subsection{Quasi-local algebras}
In this section, we discuss KMS states on a quasi-local algebra with flow. In Theorem \ref{thm:main1}, we give a necessary and sufficient condition that KMS states become \emph{multiplicative} in the sense of Definition \ref{def:multiplicative}. Throughout this section, we denote by $\mathcal{I}$ the set of all finite intervals of $\mathbb{N}$ or $\mathbb{Z}$.

Let $\mathfrak{M}$ be a unital $C^*$-algebra and $(M_I)_{I\in\mathcal{I}}$ a family of $W^*$-subalgebras in $\mathfrak{M}$, that is, $M_I$ is a $C^*$-subalgebra of $\mathfrak{M}$ and there exists a unique Banach space $(M_I)_*$ such that $(M_I)_*^*\cong M_I$. In the paper, we call $(\mathfrak{M}, (M_I)_{I\in\mathcal{I}})$ a \emph{quasi-local algebra} if
\begin{itemize}\itemsep=0pt
 \item[(ql1)] $M_I\subseteq M_J$ if $I\subseteq J$,
 \item[(ql2)] $\mathfrak{M}_0:=\bigcup_{I\in\mathcal{I}}M_I$ is dense in $\mathfrak{M}$,
 \item[(ql3)] $M_I$ has the common identity $1\in\mathfrak{M}$,
 \item[(ql4)] $M_I$ commutes with $M_J$ if $I\cap J=\varnothing$.
\end{itemize}
An element in $\mathfrak{M}_0$ is called a \emph{local observable} and $\mathfrak{M}_{0, {\rm sa}}$ denote the set of all self-adjoint local observables. We remark that the formalism here is slightly different from \cite[Section 2.6]{BR1}. In order to define quasi-local algebras, we use the relation on $\mathcal{I}$ given by the disjointness of intervals although it does not completely satisfy the condition of \emph{orthogonality relation} in \cite{BR1}. However, our formalism naturally appear in the asymptotic representation theory and is sufficiently suitable for the discussion in Section \ref{sect:4}.

Let $(\mathfrak{M}, (M_I)_{I\in\mathcal{I}})$ be a quasi-local algebra. We now follow the setup in \cite{Ueda22, Ueda20}. Namely, we assume that $\mathfrak{M}$ has a flow $\alpha\colon\mathbb{R}\curvearrowright\mathfrak{M}$ such that~$\alpha_t(M_I)=M_I$ and $\lim_{t\to0}\|\omega\circ\alpha_t|_{M_I}-\omega\|=0$ holds for any $I\in\mathcal{I}$ and $\omega\in (M_I)_*$. Let $\mathfrak{Z}_I$ denote the set of all minimal projections in the center $Z(M_I)$ of $M_I$ and assume that $\mathfrak{Z}_I$ is a countable set. Moreover, we assume that~$M_I=\ell^\infty\mathchar`-\bigoplus_{z\in\mathfrak{Z}_I}z M_I$ and $zM_I\cong B(\mathfrak{h}_z)$ for some \emph{finite dimensional} Hilbert space $\mathfrak{h}_z$. We particularly set $M_\varnothing=\mathbb{C}1$. By \cite[Lemma 7.1]{Ueda20}, the action $\alpha$ fixes elements in $Z(M_I)$, and hence for any $z\in\mathfrak{Z}_I$ the restriction $\alpha|_{zM_I}$ gives a continuous flow on $zM_I$. Now we fix an inverse temperature $\beta\in\mathbb{R}$, and then there is a unique normal $(\alpha|_{zM_I}, \beta)$-KMS state $\chi_z$ on $z M_I$.

\begin{Remark}\label{rem:2.1}
 We later deal with a quasi-local algebra given by the inductive system of the unitary groups ${\rm U}(I)$ on $\ell^2(I)\cong\mathbb{C}^{|I|}$, and then $M_I$ are given as the group $W^*$-algebras $W^*({\rm U}(I))$ (and $\alpha$ is trivial). This setting might seem slightly strange at a glance for the readers who are familiar with the asymptotic representation theory. In the asymptotic representation theory of~${\rm U}(\infty)$, we usually do not discriminate between ${\rm U}(I)$ and the unitary group $U(|I|)$ of rank~$|I|$. However, distinguishing them play an important role in the paper, and hence we deal with a~family of $W^*$-algebras indexed by $\mathcal{I}$.
\end{Remark}

For any $I\in\mathcal{I}$, we define a faithful normal conditional expectation $E_I\colon M_I\to Z(M_I)$ by
\[E_I(x):=\sum_{z\in\mathfrak{Z}_I}\chi_z(zx)z,\qquad x\in M_I.\]
\begin{Lemma}\label{lem:ocha}
 Let $I, J\in\mathcal{I}$ be disjoint and $I\sqcup J\in \mathcal{I}$. For any $x\in M_I$ and $y\in M_J$, we have~$E_{I\sqcup J}(xy)=E_{I\sqcup J}(E_I(x)E_J(y))$.
\end{Lemma}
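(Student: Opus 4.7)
The plan is to reduce the claimed equality to a family of scalar identities indexed by $w\in\mathfrak{Z}_K$ (with $K:=I\sqcup J$), and then to prove each of them by pulling back the relevant state from $M_K$ onto $M_I\overline{\otimes}M_J$, where the factor structure forces the answer. Since $E_K(a)=\sum_{w\in\mathfrak{Z}_K}\chi_w(wa)\,w$, the lemma is equivalent to showing
\[
\chi_w(wxy)=\chi_w\bigl(wE_I(x)E_J(y)\bigr)\qquad(w\in\mathfrak{Z}_K,\ x\in M_I,\ y\in M_J).
\]

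Fix such a $w$; by the centre-fixing property of $\alpha$ recalled in the setup, $\alpha_t(w)=w$, so the state $\psi_w(a):=\chi_w(wa)$ on $M_K$ satisfies
\[
\psi_w\bigl(a\alpha_{i\beta}(b)\bigr)=\chi_w\bigl((wa)\alpha_{i\beta}(wb)\bigr)=\chi_w(wba)=\psi_w(ba)
\]
for $a,b$ in the $\alpha$-invariant $\ast$-subalgebra $M_I\vee M_J\subseteq M_K$. Hence $\psi_w|_{M_I\vee M_J}$ is a $(\alpha|_{M_I\vee M_J},\beta)$-KMS state. Because $M_I$ and $M_J$ commute by (ql4), the multiplication map $\pi\colon M_I\overline{\otimes}M_J\to M_I\vee M_J$, $\pi(x\otimes y)=xy$, is a normal surjective $\ast$-homomorphism intertwining $\alpha\otimes\alpha$ with $\alpha|_{M_I\vee M_J}$, so $\tilde{\psi}_w:=\psi_w\circ\pi$ is a $(\alpha\otimes\alpha,\beta)$-KMS state on
$M_I\overline{\otimes}M_J=\ell^\infty\mathchar`-\bigoplus_{(u,v)\in\mathfrak{Z}_I\times\mathfrak{Z}_J}uM_I\overline{\otimes}vM_J$.

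On each summand $uM_I\overline{\otimes}vM_J$, which is a type I factor carrying a continuous flow, the same uniqueness of normal KMS state used to define $\chi_z$ forces $\tilde{\psi}_w|_{uM_I\overline{\otimes}vM_J}$ to be a scalar multiple of $\chi_u\otimes\chi_v$. Setting $c_{uv}:=\tilde{\psi}_w(u\otimes v)\ge 0$ gives $\tilde{\psi}_w=\sum_{u,v}c_{uv}\,\chi_u\otimes\chi_v$, and combining this with $uE_I(x)=\chi_u(ux)u$ together with its analogue for $E_J(y)$ yields
\[
\chi_w(wxy)=\tilde{\psi}_w(x\otimes y)=\sum_{u,v}c_{uv}\chi_u(ux)\chi_v(vy)=\tilde{\psi}_w\bigl(E_I(x)\otimes E_J(y)\bigr)=\chi_w\bigl(wE_I(x)E_J(y)\bigr);
\]
summing $\chi_w(w\,\cdot)\,w$ over $w\in\mathfrak{Z}_K$ then finishes the proof.

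The main technical point I expect to need care with is verifying that $\pi$ is well-defined as a normal $\ast$-homomorphism on the von Neumann tensor product and that the summand-wise KMS decomposition of $\tilde{\psi}_w$ goes through for the (possibly infinite) countable index set $\mathfrak{Z}_I\times\mathfrak{Z}_J$; once that direct-sum and tensor-product bookkeeping is set up cleanly, the rest of the argument is the short calculation above.
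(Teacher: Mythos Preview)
Your argument is correct, and its core mechanism---uniqueness of the normal $(\alpha,\beta)$-KMS state on each finite-dimensional factor---is the same one the paper uses. The execution, however, is different. The paper avoids the tensor product entirely: it fixes a positive $x\in M_I$ and observes that for each $z\in\mathfrak{Z}_{I\sqcup J}$ with $zx\neq 0$, the functional $y\mapsto \chi_z(zxy)/\chi_z(zx)$ is a normal $(\alpha|_{M_J},\beta)$-KMS state on $M_J$ (using only that $x$ commutes with $M_J$), and then invokes the fact that any such state satisfies $\chi_{z,x}\circ E_J=\chi_{z,x}$. This gives $E_{I\sqcup J}(xy)=E_{I\sqcup J}(xE_J(y))$ directly; a symmetric step handles $E_I$, and the two combine. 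This one-variable-at-a-time trick sidesteps precisely the bookkeeping you flag---there is no need to construct $\pi\colon M_I\overline{\otimes}M_J\to M_I\vee M_J$ or to justify the summand-wise decomposition over $\mathfrak{Z}_I\times\mathfrak{Z}_J$. Your route is more structural and makes the product nature of the answer visible, but the paper's is shorter and needs nothing beyond KMS uniqueness on $M_J$ (resp.\ $M_I$) alone.
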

\begin{proof}
 Let $x\in M_I$ and $y\in M_J$. Since every element in $M_I$ is a linear combination of (four) positive elements in $M_I$ (see, e.g., \cite[Proposition II.3.12\,(vi)]{Blackadar:book}), we may assume that $x$ is positive. For any $z\in\mathfrak{Z}_{I\sqcup J}$ with $zx\neq 0$, we define a state $\chi_{z, x}$ on $M_J$ by $\chi_{z, x}(y):=\chi_z(zxy)/\chi_z(zx)$ for any $y\in M_J$. Since $x\in (M_J)'$, it is easy to check $\chi_{z, x}$ is a normal $(\alpha|_{M_J}, \beta)$-KMS state on $M_J$. By \cite[Lemma 7.3]{Ueda20}, we have $\chi_{z, x}\circ E_J=\chi_{z, x}$, i.e., $\chi_z(zxE_J(y))=\chi_z(zxy)$ for any $z\in\mathfrak{Z}_{I\sqcup J}$. Thus, we have
 \[E_{I\sqcup J}(xE_J(y))=\sum_{z\in\mathfrak{Z}_{I\sqcup J}}\chi_z(zxE_J(y))z=\sum_{z\in\mathfrak{Z}_{I\sqcup J}}\chi_z(zxy)z=E_{I\sqcup J}(xy).\]
 Replacing the roles of $x$ and $y$, we have $E_{I\sqcup J}(xy)=E_{I\sqcup J}(E_I(x)y)$ for any $x\in M_I$, $y\in M_J$. Therefore, combining two formulas, $E_{I\sqcup J}(xy)=E_{I\sqcup J}(xE_J(y))=E_{I\sqcup J}(E_I(x)E_J(y))$ holds true for any $x\in M_I$, $y\in M_J$.
\end{proof}

We recall that a state $\chi$ on $\mathfrak{M}$ is said to be \emph{locally normal} if $\chi|_{{M_I}}$ are normal on $M_I$ for any~$I\in\mathcal{I}$. Moreover, we define the following:
\begin{Definition}\label{def:multiplicative}
 A state $\chi$ on $\mathfrak{M}$ is said to be \emph{multiplicative} if $I\cap J=\varnothing$ implies $\chi(xy)=\chi(x)\chi(y)$ for any $x\in M_I$ and $y\in M_J$.
\end{Definition}

Let us recall that $x\in\mathfrak{M}$ is said to be \emph{analytic} (for $\alpha$) if the function $t\in\mathbb{R}\mapsto \alpha_t(x)\in \mathfrak{M}$ can be extended to an entire function on $\mathbb{C}$. A state $\chi$ on $\mathfrak{M}$ is called a \emph{$(\alpha, \beta)$-KMS} (\emph{Kubo--Martin--Schwinger}) state if $\chi(x\alpha_{\mathrm{i}\beta}(y))=\chi(yx)$ holds for any analytic elements $x, y\in \mathfrak{M}$. We remark that $(\alpha, \beta)$-KMS states are nothing but tracial states when $\alpha$ is trivial.

The following is the first main result in the paper:
\begin{Theorem}\label{thm:main1}\samepage
 Let $\chi$ be a locally normal $(\alpha, \beta)$-KMS state on $\mathfrak{M}$. The following are equivalent:
 \begin{itemize}\itemsep=0pt
 \item[$(1)$] $\chi$ is multiplicative,
 \item[$(2)$] for any $I\sqcup J\in\mathcal{I}$ and $z\in Z(M_I)$, $w\in Z(M_J)$, we have $\chi(zw)=\chi(z)\chi(w)$.
 \end{itemize}
\end{Theorem}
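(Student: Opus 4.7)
The implication $(1)\Rightarrow(2)$ is immediate from Definition \ref{def:multiplicative}. The substance of the theorem is the converse, for which my plan is to combine Lemma \ref{lem:ocha} with the fact that a normal $(\alpha|_{M_I},\beta)$-KMS state factors through the center-valued conditional expectation, i.e.\ $\chi\circ E_I=\chi|_{M_I}$ (a direct consequence of \cite[Lemma 7.3]{Ueda20}, already used inside the proof of Lemma \ref{lem:ocha}).

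Fix disjoint $I,J\in\mathcal I$ with $x\in M_I$, $y\in M_J$; assume $I<J$ (the opposite case is symmetric). Let $G$ be the gap between $I$ and $J$ (possibly empty) and set $K:=I\sqcup G$, so that $K$ and $J$ are adjacent, $K\sqcup J\in\mathcal I$, and $x\in M_I\subseteq M_K$. Applying Lemma \ref{lem:ocha} to the pair $(K,J)$ and using $\chi\circ E_{K\sqcup J}=\chi|_{M_{K\sqcup J}}$ yields
\[\chi(xy)=\chi\bigl(E_{K\sqcup J}(xy)\bigr)=\chi\bigl(E_{K\sqcup J}(E_K(x)E_J(y))\bigr)=\chi\bigl(E_K(x)E_J(y)\bigr).\]
Expanding $E_K(x)=\sum_{u\in\mathfrak{Z}_K}\chi_u(ux)\,u$ and $E_J(y)=\sum_{w\in\mathfrak{Z}_J}\chi_w(wy)\,w$, interchanging $\chi$ with the $\ell^\infty$-direct sums by local normality, and substituting $\chi(uw)=\chi(u)\chi(w)$ via hypothesis (2) applied to the adjacent pair $(K,J)$, the double sum factors as $\chi(E_K(x))\chi(E_J(y))$; one more use of $\chi\circ E_K=\chi|_{M_K}$ and $\chi\circ E_J=\chi|_{M_J}$ then gives $\chi(xy)=\chi(x)\chi(y)$.

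The main subtlety I anticipate is the introduction of $K$: since Definition \ref{def:multiplicative} allows disjoint intervals with a nonempty gap while hypothesis (2) only speaks about adjacent pairs, one has to absorb the gap into $I$ before invoking (2). The modest price is that one works with the central projections of $M_K$ rather than those of $M_I$, but this causes no harm because $x\in M_I\subseteq M_K$, so $E_K$ is simply applied to $x$ regarded as an element of $M_K$. After this observation, everything reduces to a routine unpacking of Lemma \ref{lem:ocha} and local normality.
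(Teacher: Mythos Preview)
Your argument is correct and follows essentially the same route as the paper: enlarge one of the intervals so that the union is again an interval, then combine Lemma~\ref{lem:ocha} with $\chi\circ E_{\bullet}=\chi|_{M_\bullet}$ (from \cite[Lemma~7.3]{Ueda20}) to reduce to central elements, where hypothesis~(2) applies. The only cosmetic difference is that the paper absorbs the gap by ``replacing $I$ or $J$ by bigger intervals'' and keeps the names $I,J$, whereas you introduce the explicit auxiliary interval $K=I\sqcup G$ and spell out the $\ell^\infty$-expansion and the local-normality interchange; both amount to the same computation.
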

\begin{proof}
 Clearly, (1) implies (2). We assume that the condition (2) holds. Let $I, J\in\mathcal{I}$ be disjoint and $x\in M_I$, $y\in M_J$. We may assume that $I\sqcup J\in\mathcal{I}$ by replacing $I$ or $J$ by bigger intervals if necessary. Then we have
 \begin{align*}
 \chi(xy)
 & =\chi(E_I(x)E_J(y))\qquad\qquad (\text{by \cite[Lemma 7.3]{Ueda20}, Lemma \ref{lem:ocha}}) \\
 & =\chi(E_I(x))\chi(E_J(y)) \\
 & =\chi(x)\chi(y)\hspace{25mm}(\text{by \cite[Lemma 7.3]{Ueda20}}).
 \end{align*}
 Namely, $\chi$ is multiplicative.
\end{proof}

\subsection{Quasi-local algebras with shift translations}\label{subsect:shift}
Here we assume the following additional setup: For every pair $I, J\in\mathcal{I}$, if $|I|=|J|$, then there is a normal $*$-isomorphism $\gamma_{I, J}\colon M_J\to M_I$ such that
\begin{itemize}\itemsep=0pt
 \item[(1)] $\gamma_{I, I}=\mathrm{id}$ and $\gamma_{I, J}\circ\gamma_{J, K}=\gamma_{I, K}$ for any $I, J, K\in\mathcal{I}$ with $|I|=|J|=|K|$,
 \item[(2)] if $I\subset J$, then $\gamma_{J+j, J}|_{M_I}=\gamma_{I+j, I}$ for any $j\in\mathbb{Z}$ with $J+j:=\{n\in\mathbb{Z}\mid n-j\in J\}\in\mathcal{I}$,
 \item[(3)] $\alpha_t|_{M_I}\circ\gamma_{I, J}=\gamma_{I, J}\circ\alpha_t|_{M_J}$ on $M_J$ for any $t\in\mathbb{R}$ and $I, J\in\mathcal{I}$ with $|I|=|J|$.
\end{itemize}
See Section \ref{sect:3} for a concrete example of this setting.

\begin{Lemma}
 For any $I, J\in\mathcal{I}$ with $|I|=|J|$, we have $E_I\circ\gamma_{I, J}=\gamma_{I, J}\circ E_J$ on $M_J$.
\end{Lemma}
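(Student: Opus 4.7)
The plan is to trace both sides of $E_I\circ\gamma_{I,J}=\gamma_{I,J}\circ E_J$ through the explicit formula $E_K(x)=\sum_{z\in\mathfrak{Z}_K}\chi_z(zx)z$ and match term-by-term via the bijection of minimal central projections induced by $\gamma_{I,J}$.

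First I would observe that since $\gamma_{I,J}\colon M_J\to M_I$ is a normal $\ast$-isomorphism, it carries $Z(M_J)$ onto $Z(M_I)$ and restricts to a bijection $\mathfrak{Z}_J\to\mathfrak{Z}_I$, $z\mapsto z':=\gamma_{I,J}(z)$. Moreover, it restricts to a $\ast$-isomorphism $zM_J\to z'M_I$. The key structural input is hypothesis (3): because $\gamma_{I,J}$ intertwines $\alpha_t|_{M_J}$ with $\alpha_t|_{M_I}$, it also intertwines the restricted flows on $zM_J$ and $z'M_I$, and it sends analytic elements to analytic elements with $\gamma_{I,J}(\alpha_{\mathrm{i}\beta}(a))=\alpha_{\mathrm{i}\beta}(\gamma_{I,J}(a))$.

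Next, I would transport the KMS state. The pullback $\chi_z\circ\gamma_{I,J}^{-1}$ is a normal state on $z'M_I$, and a direct check using the KMS identity for $\chi_z$ on $zM_J$ combined with the intertwining of analytic elements shows that it is an $(\alpha|_{z'M_I},\beta)$-KMS state. By the uniqueness assumption on KMS states in each central summand, $\chi_z\circ\gamma_{I,J}^{-1}=\chi_{z'}$, i.e., $\chi_{z'}(\gamma_{I,J}(a))=\chi_z(a)$ for all $a\in zM_J$.

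Finally, for $x\in M_J$, I would compute
\[
\gamma_{I,J}(E_J(x))=\sum_{z\in\mathfrak{Z}_J}\chi_z(zx)\,\gamma_{I,J}(z)=\sum_{z\in\mathfrak{Z}_J}\chi_{z'}\bigl(\gamma_{I,J}(zx)\bigr)\,z'=\sum_{z'\in\mathfrak{Z}_I}\chi_{z'}\bigl(z'\gamma_{I,J}(x)\bigr)\,z'=E_I(\gamma_{I,J}(x)),
\]
using the intertwining of KMS states in the second equality, the reindexing via the bijection $z\mapsto z'$ in the third, and $\gamma_{I,J}(zx)=\gamma_{I,J}(z)\gamma_{I,J}(x)=z'\gamma_{I,J}(x)$ throughout. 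The only nontrivial step is the uniqueness argument producing $\chi_{z'}\circ\gamma_{I,J}=\chi_z$; everything else is formal manipulation of the defining formula for $E_I$ and $E_J$.
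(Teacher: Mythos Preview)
Your proof is correct and follows essentially the same approach as the paper: both use that $\gamma_{I,J}$ bijects $\mathfrak{Z}_J$ with $\mathfrak{Z}_I$ and invoke uniqueness of the normal KMS state on each central summand to obtain $\chi_{\gamma_{I,J}(z)}\circ\gamma_{I,J}|_{zM_J}=\chi_z$, then plug this into the defining sum for $E_I$ and $E_J$. Your write-up simply spells out in more detail what the paper's one-line appeal to uniqueness entails.
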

\begin{proof}
 For any $z\in\mathfrak{Z}_J$ we have $\chi_{\gamma_{I, J}(z)}\circ\gamma_{I, J}|_{z M_J}=\chi_z$ by the uniqueness of a normal $(\alpha|_{zM_J}, \beta)$-KMS states on $zM_J$. Thus, for any $x\in M_J$ we have
 \[E_I(\gamma_{I, J}(x))=\sum_{w\in\mathfrak{Z}_I}\chi_w(w\gamma_{I, J}(x))w=\sum_{z\in\mathfrak{Z}_J}\chi_{\gamma_{I, J}(z)}(\gamma_{I, J}(zx))\gamma_{I, J}(z)=\gamma_{I, J}(E_J(x)).\tag*{\qed}
 \]\renewcommand{\qed}{}
\end{proof}

A state $\chi$ is said to be \emph{shift invariant} if $\chi\circ\gamma_{I, J}=\chi|_{M_J}$ on $M_J$ for any $I, J\in\mathcal{I}$ with~$|I|=|J|$. The aim here is to give a characterization of multiplicative shift invariant locally normal $(\alpha, \beta)$-KMS states.

Motivated by the work by Ueda \cite{Ueda22}, we introduce the following:
\begin{Definition}
 We define $\widetilde\Sigma$ as the algebraic direct sum $\bigoplus_{I\in\mathcal{I}}Z(M_I)$. The subspace $\Sigma$ is defined as the algebraic direct sum $\bigoplus_{n\geq0}Z(M_{[n]})$, where $[0]:=\varnothing$ and $[n]:=\{1,\dots, n\}$.
\end{Definition}
For any $z_I\in Z(M_I)$ and $z_J\in Z(M_J)$, we define $z_I\cdot z_J:=E_{I\vee J}(z_I\gamma_{J+j, J}(z_J))$, where $I\vee J:=I\sqcup(J+j)$ and $j\in\mathbb{Z}$ such that $\max I+1=\min J+j$. Namely, we have $I\vee J\in\mathcal{I}$. This operation defines an associative multiplication on $\widetilde\Sigma$. Let $z_*\in Z(M_*)$ for each $*=I, J, K\in\mathcal{I}$. For some $j, k\in\mathbb{Z}$, we have $(I\vee J)\vee K=I\vee (J\vee K)=I\sqcup (J+j)\sqcup (K+k)$. By Lemma \ref{lem:ocha}, we have
\begin{align*}
 (z_I\cdot z_J)\cdot z_K
 & =E_{(I\vee J)\vee K}(E_{I\vee J}(z_I\gamma_{J+j, J}(z_J))\gamma_{K+k, K}(z_K)) \\
 & =E_{I\vee(J\vee K)}(z_IE_{(J\vee K)+j}(\gamma_{J+j, J}(z_J)\gamma_{K+k, K}(z_K))) \\
 & =E_{I\vee(J\vee K)}(z_I\gamma_{(J\vee K)+j, J\vee K}(E_{J\sqcup(K+k-j)}(z_J\gamma_{K+k-j, K}(z_K)))) \\
 & =z_I\cdot(z_J\cdot z_K).
\end{align*}
Thus, $\widetilde\Sigma$ becomes a unital algebra over $\mathbb{C}$ with unit $1_\varnothing\in M_\varnothing=\mathbb{C}1_\varnothing$, and $\Sigma$ is a subalgebra of~$\widetilde{\Sigma}$.

\begin{Remark}
 As we mentioned in Remark \ref{rem:2.1}, unlike the usual discussion of asymptotic representation theory, we deal with $W^*$-algebras $M_I$ labeled by any finite intervals $I$ in $\mathbb{N}$ or $\mathbb{Z}$ in the paper. Hence it seems natural to introduce $\widetilde\Sigma$ rather than $\Sigma$. However, if a quasi-local algebra~$\mathfrak{M}$ has shift translations, then $M_I$ depends only on $|I|$, and one can expect $\Sigma$ to be sufficient to study appropriate (shift invariant) states. See Theorem \ref{thm:main2}. The unital algebra $\Sigma$ is defined similarly to \emph{representation rings} of Olshanski \cite{O16} and Ueda \cite{Ueda22}. If we deal with intervals of $\mathbb{N}$, there exists an ideal of $\Sigma$ such that the quotient should be understood as a dimension group of Vershik and Kerov (see \cite{KV80,VK85,VK87, KV90} and also \cite[Remark~3.4.1]{Ueda22}).
\end{Remark}

We define a linear map $\tilde\gamma\colon\widetilde\Sigma\to\widetilde\Sigma$ by $\tilde\gamma(z):=\gamma_{I+1, I}(z)$ for any $z\in Z(M_I)\subset\widetilde\Sigma$. Namely, we have $\tilde\gamma(1_\varnothing)=1_\varnothing$. Let $I, J\in\mathcal{I}$ such that $I\vee J=I\sqcup (J+j)$ and $z\in Z(M_I)$, $w\in Z(M_J)$. Since
\[(I+1)\vee J=(I\vee J)+1=(I+1)\sqcup(J+j+1),\qquad I\vee (J+1)=I\vee J,\]
we have
\begin{align*}z\cdot\tilde\gamma(w)&=z\gamma_{J+1, J}(w)=E_{I\vee (J+1)}(z\gamma_{J+j, J}(w))=z\cdot w,\\
 \tilde\gamma(z)\cdot w
 & =\gamma_{I+1, I}(z)\cdot w =E_{(I+1)\vee J}(\gamma_{I+1, I}(z)\gamma_{J+j+1, J}(w)) \\
 & =E_{(I\vee J)+1}(\gamma_{(I\vee J)+1, I\vee J}(z\gamma_{J+j, J}(w))) =\gamma_{(I\vee J)+1, I\vee J}(E_{I\vee J}(z\gamma_{J+j, J}(w))) \\
 & =\tilde\gamma(z\cdot w).
\end{align*}
Thus, we have $\tilde\gamma(z\cdot w)=\tilde\gamma(z)\cdot w=\tilde\gamma(z)\cdot\tilde\gamma(w)$, that is, $\tilde\gamma$ is a unital homomorphism. For~$z, w\in\widetilde\Sigma$ we write $z\sim w$ if $\tilde\gamma^k(z)=\tilde\gamma^l(w)$ for some $k, l\in\mathbb{N}$. Then $\sim$ is an equivalence relation on $\widetilde\Sigma$, and $[z]$ denotes the equivalence class of $z\in\widetilde\Sigma$. By the above two equations, $[z]\cdot[w]:=[z\cdot w]$, $\big(z, w\in\widetilde\Sigma\big)$ is a well-defined multiplication on $\widetilde\Sigma/{\sim}$, that is, $\widetilde\Sigma/{\sim}$ is also a unital algebra. Then we have following:
\begin{Proposition}
 Two algebras $\Sigma$ and $\widetilde\Sigma/{\sim}$ is isomorphic.
\end{Proposition}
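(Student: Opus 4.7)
The plan is to exhibit mutually inverse algebra homomorphisms between $\Sigma$ and $\widetilde\Sigma/{\sim}$. The map $\Phi\colon \Sigma \to \widetilde\Sigma/{\sim}$ is the natural one, induced by the inclusion $\Sigma \hookrightarrow \widetilde\Sigma$ composed with the quotient; since $\Sigma$ is a subalgebra of $\widetilde\Sigma$ and multiplication is compatible with $\sim$ by the discussion just preceding the Proposition, this is manifestly a unital algebra homomorphism.

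For the inverse, I would define a linear map $\widetilde\Psi\colon \widetilde\Sigma \to \Sigma$ componentwise by $\widetilde\Psi(z) := \gamma_{[|I|], I}(z) \in Z(M_{[|I|]})$ for $z \in Z(M_I)$, extended by linearity over the direct sum. The cocycle $\gamma_{K, J}\circ \gamma_{J, I} = \gamma_{K, I}$ (axiom~(1)) together with $\tilde\gamma|_{Z(M_I)} = \gamma_{I+1, I}$ gives $\widetilde\Psi\circ \tilde\gamma = \widetilde\Psi$ immediately, so by iteration $\widetilde\Psi$ is constant on $\tilde\gamma$-orbits; hence $\tilde\gamma^k(z) = \tilde\gamma^l(w)$ implies $\widetilde\Psi(z) = \widetilde\Psi(w)$, and $\widetilde\Psi$ descends to a well-defined map $\Psi\colon \widetilde\Sigma/{\sim}\to \Sigma$.

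To check that $\Psi$ is an algebra homomorphism, I would take $z\in Z(M_I)$, $w\in Z(M_J)$ with $I\vee J = I\sqcup(J+j)$, apply the shift isomorphism $\gamma_{[|I|+|J|], I\vee J}$ to the defining formula $z\cdot w = E_{I\vee J}(z\,\gamma_{J+j, J}(w))$, move it past $E_{I \vee J}$ via the preceding Lemma, and then use axiom~(2) of the shift-translation setup together with the cocycle to identify the two factors with $\widetilde\Psi(z)$ and $\gamma_{[|J|]+|I|,[|J|]}(\widetilde\Psi(w))$. The resulting expression equals $\widetilde\Psi(z)\cdot\widetilde\Psi(w)$ once one unwinds the product in $\Sigma$ using $[|I|]\vee[|J|] = [|I|+|J|]$ (with shift $|I|$). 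Unitality is clear since $\gamma_{[0],\varnothing}$ fixes $1$.

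Finally, $\Psi\circ\Phi = \mathrm{id}_\Sigma$ is immediate because $\gamma_{[n], [n]} = \mathrm{id}$, and $\Phi\circ\Psi = \mathrm{id}_{\widetilde\Sigma/{\sim}}$ reduces to verifying $z \sim \gamma_{[|I|], I}(z)$ for each $z \in Z(M_I)$: choose $k, l \geq 0$ with $l = \min I + k - 1$, then $I+k = [|I|]+l$ and the cocycle gives $\tilde\gamma^k(z) = \gamma_{I+k,I}(z) = \gamma_{[|I|]+l, I}(z) = \tilde\gamma^l(\gamma_{[|I|], I}(z))$. The main obstacle I expect is the algebra-homomorphism step, where one must reconcile the shift $j$ appearing in $I\vee J$ with the canonical $|I|$-shift inside $[|I|+|J|]$ and verify that $\gamma_{[|I|+|J|], I\vee J}$ intertwines them correctly on the subalgebras $M_I$ and $M_{J+j}$; this is exactly what the cocycle relation and shift axiom~(2), combined with the commutation $E\circ \gamma = \gamma\circ E$, are designed to encode.
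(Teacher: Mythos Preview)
Your proposal is correct and follows exactly the same strategy as the paper: define the forward map $\Sigma\to\widetilde\Sigma/{\sim}$ as inclusion-then-quotient, and the inverse by sending $[z_I]$ to the unique element of $Z(M_{[|I|]})$ equivalent to $z_I$, which you make explicit as $\gamma_{[|I|],I}(z_I)$. The paper's proof is a three-sentence sketch that omits the verifications you supply (well-definedness on $\sim$-classes, multiplicativity via the lemma $E\circ\gamma=\gamma\circ E$ and axioms (1), (2), and the two composite identities), so your version is simply a fleshed-out form of the same argument.
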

\begin{proof}
 By definition, the quotient map $\widetilde\Sigma\to\widetilde\Sigma/{\sim}$ is multiplicative. Since $\Sigma\subset\widetilde\Sigma$, we naturally obtain a unital homomorphism $\Sigma\to\widetilde\Sigma/{\sim}$. Conversely, we obtain the inverse homomorphism~$\widetilde\Sigma/{\sim}\to \Sigma$ by $[z_I]\mapsto z$ for any $z_I\in Z(M_I)$, where $z$ is a unique element in $Z(M_{[|I|]})$ such that~$z_I\sim z$.
\end{proof}

For any linear functional $\chi$ on $\mathfrak{M}$, we define a linear functional $\tilde\omega_\chi$ on $\widetilde\Sigma$ by $\tilde\omega_\chi(z)=\chi(z)$ for any $z\in \widetilde\Sigma$. If $\chi$ is shift invariant, then we have $\tilde\omega_\chi\circ\tilde\gamma=\tilde\omega_\chi$. Thus, we obtain a linear functional $\omega_\chi$ on $\widetilde\Sigma/{\sim}$ by $\omega_\chi([z]):=\chi(z)$. If we identify $\widetilde\Sigma/{\sim}$ with $\Sigma$, then $\omega_\chi$ is nothing but the restriction of $\tilde\omega_\chi$ to $\Sigma$. Now we give a characterization of multiplicativity of locally normal shift invariant $(\alpha, \beta)$-KMS states:

\begin{Theorem}\label{thm:main2}
 Let $\chi$ be a shift invariant normal $(\alpha, \beta)$-KMS state on $(\mathfrak{M}, (M_I)_{I\in\mathcal{I}})$. The following are equivalent:
 \begin{itemize}\itemsep=0pt
 \item[$(1)$] $\chi$ is multiplicative,
 \item[$(2)$] $\tilde\omega_\chi$ is multiplicative on $\widetilde\Sigma$,
 \item[$(3)$] $\omega_\chi$ is multiplicative on $\Sigma$.
 \end{itemize}
\end{Theorem}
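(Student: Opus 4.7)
The plan is to split the three equivalences into two halves: establish $(1)\Leftrightarrow(2)$ by coupling Theorem \ref{thm:main1} with the definition of the product on $\widetilde\Sigma$, and then $(2)\Leftrightarrow(3)$ by using shift invariance to factor $\tilde\omega_\chi$ through the quotient $\widetilde\Sigma/{\sim}\cong\Sigma$ supplied by the preceding proposition. The key reusable tool throughout is the KMS identity $\chi\circ E_I=\chi|_{M_I}$ from \cite[Lemma 7.3]{Ueda20}, which lets one strip the conditional expectation appearing in $z\cdot w=E_{I\vee J}(z\gamma_{J+j,J}(w))$.

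For $(1)\Rightarrow(2)$, I would take $z\in Z(M_I)$, $w\in Z(M_J)$ and compute
\[\tilde\omega_\chi(z\cdot w)=\chi(E_{I\vee J}(z\gamma_{J+j,J}(w)))=\chi(z\gamma_{J+j,J}(w))=\chi(z)\chi(\gamma_{J+j,J}(w))=\chi(z)\chi(w),\]
using in turn the KMS absorption, the multiplicativity of $\chi$ on the disjoint intervals $I$ and $J+j$, and finally the shift invariance of $\chi$. Bilinearity extends this to all of $\widetilde\Sigma$. For $(2)\Rightarrow(1)$, I would invoke Theorem \ref{thm:main1} and reduce to the case $z\in Z(M_I)$, $w\in Z(M_J)$ with $I\sqcup J\in\mathcal{I}$. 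Since $M_I$ commutes with $M_J$ and the identity $\chi(zw)=\chi(z)\chi(w)$ is symmetric in $z$ and $w$, one may relabel to assume $\max I+1=\min J$, so that $j=0$, $I\vee J=I\sqcup J$, and $z\cdot w=E_{I\sqcup J}(zw)$. KMS absorption then gives $\chi(zw)=\tilde\omega_\chi(z\cdot w)=\tilde\omega_\chi(z)\tilde\omega_\chi(w)=\chi(z)\chi(w)$.

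The equivalence $(2)\Leftrightarrow(3)$ will then be a transport argument. Shift invariance of $\chi$ yields $\tilde\omega_\chi\circ\tilde\gamma=\tilde\omega_\chi$, so $\tilde\omega_\chi$ is constant on $\sim$-equivalence classes and descends to a functional on $\widetilde\Sigma/{\sim}$; under the isomorphism $\widetilde\Sigma/{\sim}\cong\Sigma$ of the preceding proposition, this descended functional coincides with $\omega_\chi$. Because that isomorphism is a homomorphism of unital algebras with $[z]\cdot[w]=[z\cdot w]$, multiplicativity of $\tilde\omega_\chi$ on $\widetilde\Sigma$, of its descent on $\widetilde\Sigma/{\sim}$, and of $\omega_\chi$ on $\Sigma$ are mutually equivalent. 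The only place that requires real attention is the shift bookkeeping in $(1)\Rightarrow(2)$: the product $z\cdot w$ naturally lives on $I\sqcup(J+j)$ rather than on $I\sqcup J$, so one cannot apply multiplicativity of $\chi$ to $\chi(zw)$ directly. The cleanest route is to first use KMS to discard $E_{I\vee J}$, then use multiplicativity on the now genuinely disjoint pair $(I,J+j)$, and finally use shift invariance to replace $\chi(\gamma_{J+j,J}(w))$ by $\chi(w)$; once that pattern is in hand, all implications follow from essentially the same calculation.
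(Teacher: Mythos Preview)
Your proposal is correct and follows essentially the same approach as the paper: the paper's proof is extremely terse (``The equivalence of claims (1) and (2) follows from Theorem \ref{thm:main1}. Moreover, (2) and (3) are equivalent since $\tilde\omega_\chi(z\cdot w)=\omega_\chi([z]\cdot [w])$ for any $z, w\in \widetilde\Sigma$.''), and what you have written is precisely a careful unpacking of these two sentences using the KMS absorption $\chi\circ E_I=\chi|_{M_I}$, shift invariance, and the quotient identification $\widetilde\Sigma/{\sim}\cong\Sigma$. Your observation that when $I\sqcup J\in\mathcal I$ one may relabel so that $\max I+1=\min J$ (hence $j=0$ and $z\cdot w=E_{I\sqcup J}(zw)$) is exactly the hidden step needed to connect $\tilde\omega_\chi(z\cdot w)$ with $\chi(zw)$ and thereby invoke Theorem~\ref{thm:main1}.
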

\begin{proof}
 The equivalence of claims (1) and (2) follows from Theorem \ref{thm:main1}. Moreover, (2) and (3) are equivalent since $\tilde\omega_\chi(z\cdot w)=\omega_\chi([z]\cdot [w])$ for any $z, w\in \widetilde\Sigma$.
\end{proof}

\section{The case of (quantum) unitary groups}\label{sect:3}
In this section, we give a quasi-local algebra by quantum unitary groups, which has naturally an $\mathbb{R}$-flow coming from the scaling automorphism groups of the quantum unitary groups. A~certain class of KMS-states with respect to this $\mathbb{R}$-flow relates to quantized characters of the infinite-dimensional quantum unitary group, which is a fundamental concept in the asymptotic representation theory for quantum groups (see \cite{Sato1,Sato3}). Using Theorem \ref{thm:main1}, we show that some quantized characters dealt in \cite{Sato4} give multiplicative states on the quasi-local algebra (see Proposition \ref{prop:main3}).

\subsection{Compact quantum groups}
We first recall basic facts of compact quantum groups. See \cite{KliSch,NeshveyevTuset} for more details.

Let $G$ be a \emph{compact quantum group}, that is, $G$ is a pair of unital $C^*$-algebra $C(G)$ and unital $*$-homomorphism $\delta_G\colon C(G)\to C(G)\otimes C(G)$ such that
\begin{itemize}\itemsep=0pt
 \item $(\delta_G\otimes \mathrm{id})\circ\delta_G=(\mathrm{id}\otimes \delta_G)\circ\delta_G$,
 \item $(C(G)\otimes 1)\delta_G(C(G)), (1\otimes C(G))\delta_G(C(G))$ are dense in $C(G)\otimes C(G)$,
\end{itemize}
where $\otimes$ denotes the operation of minimal tensor product of $C^*$-algebras. For a finite dimensional vector space $V$ an invertible element $U\in \mathrm{End}(V)\otimes C(G)$ is called a \emph{corepresentation} of $G$ if~$(\mathrm{id}\otimes \delta_G)(U)=U_{12}U_{13}$, where we use the leg numbering notation for $U_{ij}$. For any $\varphi\in\mathrm{End}(V)^*$ we call $(\varphi\otimes\mathrm{id})(U)\in C(G)$ the \emph{matrix coefficient} of $U$. It is known that the linear span of all matrix coefficients of finite dimensional corepresentations of $G$, denote by $\mathbb{C}[G]$, has a Hopf $*$-algebra structure. Unlike ordinary compact groups, the antipode $S_G$ of $\mathbb{C}[G]$ generally does not satisfy $S_G^2=\mathrm{id}$. In fact, for any finite dimensional corepresentation $U$ on $V$ there exists a positive invertible element $\rho_U\in\mathrm{End}(V)$ such that $\big(\mathrm{id}\otimes S_G^2\big)(U)=(\rho_U\otimes 1)U\big(\rho_U^{-1}\otimes 1\big)$. We assume that~$\rho_U$ also satisfies that $\mathrm{Tr}_V(\rho_U\cdot)=\mathrm{Tr}\big(\rho_U^{-1}\cdot\big)$ on the space of intertwining operators from~$U$ to itself. Under this assumption, $\rho_U$ is uniquely determined, and the \emph{scaling automorphism group}~$\tau^G\colon\mathbb{R}\curvearrowright\mathbb{C}[G]$ is defined by $\big(\mathrm{id}\otimes\tau^G_t\big)(U)=\big(\rho_U^{\mathrm{i}t}\otimes1\big)U\big(\rho_U^{-\mathrm{i}t}\otimes1\big)$.

Let $\mathcal{U}(G):=\mathbb{C}[G]^*$ be the dual space of $\mathbb{C}[G]$. Since $\mathbb{C}[G]$ is a Hopf $*$-algebra, $\mathcal{U}(G)$ naturally has a unital $*$-algebra structure. For any corepresentation $U$ on $V$, we obtain a $*$-representation~$\pi_U\colon\mathcal{U}(G)\to \mathrm{End}(V)$ by $\pi_U(x):=(\mathrm{id}\otimes x)(U)$. Let $\widehat G$ denote the set of all equivalence classes of irreducible corepresentations of $G$ and fix representatives ${\rm U}_\lambda$ for any $\lambda\in\widehat G$. It is known that the $*$-homomorphism $\pi_G\colon\mathcal{U}(G)\to \prod_{\lambda\in\widehat G}\mathrm{End}(V_\lambda)$ given by $\pi_G(x):=(\pi_{{\rm U}_\lambda}(x))$ is a $*$-isomorphism, where $V_\lambda$ is the representation space of ${\rm U}_\lambda$. Let $W^*(G)$ denote the unital $*$-subalgebra of $\mathcal{U}(G)$ isomorphic to
\[\ell^\infty\mathchar`-\bigoplus_{\lambda\in\widehat G}\mathrm{End}(V_\lambda):=\left\{(x_\lambda)_{\lambda\in\widehat G}\in\prod_{\lambda\in\widehat G}\mathrm{End}(V_\lambda)\Bigm| \sup_{\lambda\in\widehat G}\|x_\lambda\|<\infty\right\}\]
by $\pi_G$. By definition, $W^*(G)$ has a $W^*$-algebra structure and called the \emph{group $W^*$-algebra} of~$G$. We define the dual of the scaling automorphism group $\hat\tau^G\colon\mathbb{R}\curvearrowright \mathcal{U}(G)$ by $\hat\tau^G_t(x):=x\circ\tau^G_{-t}$. Clearly, we have $\pi_G\circ\hat\tau^G_t\circ\pi_G^{-1}=\prod_{\lambda\in\widehat G}\mathrm{Ad}\rho_{{\rm U}_\lambda}^{-\mathrm{i}t}$, and hence $\hat\tau^G_t(W^*(G))=W^*(G)$ for any $t\in\mathbb{R}$ and denote by the same symbol $\hat\tau^G$ the restriction of $\hat\tau^G$ to $W^*(G)$. In \cite{Sato1,Sato3}, we introduced the following notion, which is a fundamental concept in the asymptotic representation theory for quantum groups.

\begin{Definition}
 A normal $\big(\hat\tau^G, -1\big)$-KMS state on $W^*(G)$ is called a \emph{quantized character} of $G$.
\end{Definition}

\subsection{Quantum unitary groups}
Now we discuss the quantum unitary groups. We fix a quantization parameter $q\in (0, 1)$. Let~$\mathcal{I}$ be the set of all finite intervals in $\mathbb{N}$ or $\mathbb{Z}$ and $I\in\mathcal{I}$. We define a universal unital $*$-algebra~$\mathbb{C}[{\rm U}_q(I)]$ generated by $u_{ij}$ ($i, j\in I$) and $d_{q, I}^{-1}$ subject to the relations
\begin{gather*}
u_{ik}u_{jk}=qu_{jk}u_{ik},\qquad u_{ki}u_{kj}=qu_{kj}u_{ki},\qquad i<j,\\
u_{il}u_{jk}=u_{jk}u_{il},\qquad i<j,\quad k<l,\\
u_{ik}u_{jl}-u_{jl}u_{ik}=\big(q-q^{-1}\big)u_{jk}u_{il},\qquad i<j,\quad k<l,\\
d_{q, I}^{-1}u_{ij}=u_{ij}d_{q, I}^{-1}, \qquad d_{q, I}^{-1}d_{q, I}^I=d_{q, I}^Id_{q, I}^{-1}=1,\\
u_{ij}^*=(-q)^{j-i}d_{q, I\backslash\{j\}}^{I\backslash\{i\}}d_{q, I}^{-1}, \qquad d_{q, I}^{-1*}=d_{q, I}^I,
\end{gather*}
where for any $K=\{k_1<\dots<k_m\}$, $L=\{l_1<\dots <l_m\}\subseteq I$ we define
\[d_{q, L}^K:=\sum_{\sigma\in S_m}(-q)^{\ell(\sigma)}u_{k_{\sigma(1)}l_1}\cdots u_{k_{\sigma(m)}l_m},\]
$\ell(\sigma):=$ the number of inversions in $\sigma$.

The above defining relations are the same in the book \cite[Sections 9.2.1, 9.2.3, and 9.2.4]{KliSch}, but we use the different symbol $\mathbb{C}[{\rm U}_q(I)]$ instead of $\mathcal{O}({\rm U}_q(I))$ (or $\mathcal{O}(GL_q(I))$). It is known that~$\mathbb{C}[{\rm U}_q(I)]$ has a Hopf $*$-algebra structure and becomes a \emph{CQG algebra}. Thus, by \cite[Proposition~11.32]{KliSch}, there exists a compact quantum group, denote by ${\rm U}_q(I)$, such that its Hopf $*$-algebra of matrix coefficients of finite dimensional corepresentations is isomorphic to $\mathbb{C}[{\rm U}_q(I)]$, and its $C^*$-algebra~$C({\rm U}_q(I))$ is nothing but the universal $C^*$-algebra generated by $\mathbb{C}[{\rm U}_q(I)]$. We call ${\rm U}_q(I)$ the \emph{quantum} unitary group.

Let ${\rm U}(I)$ be the group of unitary operators on $\ell^2(I)\cong \mathbb{C}^{|I|}$. It is well known that ${\rm U}_q(I)$ and~${\rm U}(I)$ have the same representation theory. In particular, all irreducible (co-)representations of ${\rm U}_q(I)$ and ${\rm U}(I)$ are labeled by $\mathbb{S}_I:=\big\{\lambda=(\lambda_i)_{i\in I}\in\mathbb{Z}^I\mid \lambda\text{ is nonincreasing}\big\}$ (see \cite{KliSch, NoumiYamadaMimachi}). Thus, the group $W^*$-algebra $W^*({\rm U}_q(I))$ is $*$-isomorphic to $\ell^\infty\mathchar`-\bigoplus_{\lambda\in\mathbb{S}_I}\mathrm{End}(V_\lambda)$, where $V_\lambda$ is the representation space of the irreducible corepresentations of ${\rm U}_q(I)$ labeled by $\lambda$, and $V_\lambda$ has the same dimension of the irreducible representation of ${\rm U}(I)$ labeled by $\lambda$. Hence $W^*({\rm U}_q(I))$ is $*$-isomorphic to $W^*({\rm U}(I))$, but the dual scaling automorphism group $\hat\tau^{{\rm U}_q(I)}\colon\mathbb{R}\curvearrowright W^*({\rm U}_q(I))$ is nontrivial unlike in the case of $W^*({\rm U}(I))$. Moreover, we can describe $\hat\tau^{{\rm U}_q(I)}$ explicitly using the representation theory of the quantized universal enveloping algebra ${\rm U}_q\mathfrak{gl}_I$.

The \emph{quantized universal enveloping algebra} ${\rm U}_q\mathfrak{gl}_I$ is a universal unital algebra generated by $K_i$, $K_i^{-1}$ ($i\in I$) and $E_j$, $F_j$ ($j\in I\backslash\{\max I\}$) subject to the relations
\begin{gather*}
K_iK_i^{-1}=K_i^{-1}K_i=1,\qquad K_iK_j=K_jK_i,\\
K_iE_jK_i^{-1}=q^{\delta_{i, j}-\delta_{i, j+1}}E_j,\qquad K_iF_jK_i^{-1}=q^{\delta_{i, j}-\delta_{i, j+1}}F_j,\\
E_iF_j-F_jE_i=\delta_{i, j}\frac{K_iK_{i+1}^{-1}-K_i^{-1}K_{i+1}}{q-q^{-1}},\\
E_iE_j=E_jE_i, \qquad F_iF_j=F_jF_i,\qquad |i-j|\geq2,\\
E_i^2E_{i\pm 1}-\big(q+q^{-1}\big)E_iE_{i\pm 1}E_i+E_{i\pm 1}E_i^2=0,\\
F_i^2F_{i\pm 1}-\big(q+q^{-1}\big)F_iF_{i\pm 1}F_i+F_{i\pm 1}F_i^2=0.
\end{gather*}
Here we follow the notation in \cite{KliSch}. It is known that ${\rm U}_q\mathfrak{gl}_I$ has a Hopf $*$-algebra structure, and there exists a non-degenerate dual pairing between two Hopf $*$-algebras ${\rm U}_q\mathfrak{gl}_I$ and $\mathbb{C}[{\rm U}_q(I)]$ (see \cite[Theorem 9.18, Corollary 11.54]{KliSch}). Using this dual pairing, we can identify ${\rm U}_q\mathfrak{gl}_I$ with a $*$-subalgebra of $\mathcal{U}({\rm U}_q(I))$, and hence for any corepresentation ${\rm U}$ of ${\rm U}_q(I)$ on $V$ we obtain a~$*$-representation $\pi_U\colon {\rm U}_q\mathfrak{gl}_I\to\mathrm{End}(V)$, which is the restriction of $\pi_U\colon\mathcal{U}({\rm U}_q(I))\to\mathrm{End}(V)$ to~${\rm U}_q\mathfrak{gl}_I$. Let ${\rm U}_\lambda$ be an irreducible corepresentation of ${\rm U}_q(I)$ labeled by $\lambda\in\mathbb{S}_I$. Then $(\pi_{{\rm U}_\lambda}, V_\lambda)$ is a highest weight representation with highest weight $\lambda$ (see \cite[Proposition 11.50]{KliSch}), and~$\pi_{{\rm U}_\lambda}$ is hereinafter referred to as $\pi_\lambda$. Moreover, we have $\rho_{{\rm U}_\lambda}=\pi_\lambda\big(K_{i_1}^{-|I|+1}K_{i_2}^{-|I|+3}\cdots K_{i_{|I|}}^{|I|-1}\big)$, where~${I=\{i_1<\cdots<i_{|I|}\}}$ (i.e., $i_{|I|}=i_1+|I|-1$). We also have
\[d_q(\lambda):=\mathrm{Tr}_{V_\lambda}(\rho_{{\rm U}_\lambda})=\mathrm{Tr}_{V_\lambda}\big(\rho_{{\rm U}_\lambda}^{-1}\big)=s_\lambda\big(q^{|I|-1}, q^{|I|-3}, \dots, q^{-|I|+1}\big),\]
where $s_\lambda$ is the Schur (Laurent) polynomial labeled by $\lambda$.

\subsection{The quasi-local algebra from quantum unitary groups}
Here we construct a quasi-local algebra from the quantum unitary groups ${\rm U}_q(I)$. First, we check that the group $W^*$-algebras $W^*({\rm U}_q(I))$ form an inductive system, and then we take their $C^*$-inductive limit $\mathfrak{M}({\rm U}_q)$. In Proposition \ref{prop:qlalg}, we show that the pair $(\mathfrak{M}({\rm U}_q), (W^*({\rm U}_q(I))_{I\in\mathcal{I}}))$ is a~quasi-local algebra.

Let $I, J\in\mathcal{I}$ such that $I\subseteq J$. By definition, there is a surjective unital $*$-homomorphism $\theta^J_I\colon \mathbb{C}[{\rm U}_q(J)]\to\mathbb{C}[{\rm U}_q(I)]$ such that for any $i, j\in J$
\[\theta^J_I(u_{ij}):=\begin{cases}u_{ij}, & i, j\in I,\\
\delta_{i, j}1,&\text{otherwise},
\end{cases}
\qquad \theta^J_I\big(d_{q, J}^{-1}\big):=d_{q, I}^{-1},\]
and $\delta_{{\rm U}_q(I)}\circ\theta^J_I=\big(\theta^J_I\otimes \theta^J_I\big)\circ\delta_{{\rm U}_q(J)}$ holds true. Namely, ${\rm U}_q(I)$ is an \emph{algebraic} quantum subgroup of~${\rm U}_q(J)$. By \cite[Theorem 2.7.10]{NeshveyevTuset}, ${\rm U}_q(I)$ is co-amenable, and hence ${\rm U}_q(I)$ is a quantum subgroup of ${\rm U}_q(J)$ (see \cite[Lemma 2.7]{Tomatsu}). Namely, $\theta^J_I$ extends to a surjective unital $*$-homomorphism from~$C({\rm U}_q(J))$ to $C({\rm U}_q(I))$. The dual map $\Theta^J_I\colon\mathcal{U}({\rm U}_q(I))\to \mathcal{U}({\rm U}_q(J))$ given by $\Theta^J_I(x):=x\circ\theta^J_I$ is an injective unital $*$-homomorphism, and we have
\[\Theta^J_I(W^*({\rm U}_q(I)))\subset W^*({\rm U}_q(J)),\qquad \Theta^J_I\circ\hat\tau^{{\rm U}_q(I)}_t=\hat\tau^{{\rm U}_q(J)}_t\circ\Theta^J_I.\]
See, e.g., \cite{Sato1} for more details. If $I\subseteq J\subseteq K$, then $\Theta^K_J\circ\Theta^J_I=\Theta^K_I$ holds true. Thus, the group $W^*$-algebras $W^*({\rm U}_q(I))$ form an inductive system, and $\mathfrak{M}({\rm U}_q)$ denotes their $C^*$-inductive limit. We remark that the canonical $*$-homomorphism $W^*({\rm U}_q(I))\to\mathfrak{M}({\rm U}_q)$ is injective since~$\Theta^J_I$ is injective for any $I, J\in\mathcal{I}$ with $I\subseteq J$. Thus, we can freely identify $W^*({\rm U}_q(I))$ with a~subalgebra of $\mathfrak{M}({\rm U}_q)$. By the universality of~$\mathfrak{M}({\rm U}_q)$, we obtain a unique flow $\hat\tau\colon\mathbb{R}\curvearrowright\mathfrak{M}({\rm U}_q)$ such that~\smash{$\hat\tau_t|_{W^*({\rm U}_q(I))}=\hat\tau^{{\rm U}_q(I)}_t$} for any $t\in\mathbb{R}$ and $I\in\mathcal{I}$.

For any $I, J\in \mathcal{I}$, the tensor product $\mathbb{C}[{\rm U}_q(I)]\otimes\mathbb{C}[{\rm U}_q(J)]$ is also a CQG algebra, and hence we obtain the associated compact quantum group, denoted by ${\rm U}_q(I)\times {\rm U}_q(J)$. We have
\[\mathcal{U}({\rm U}_q(I)\times {\rm U}_q(J)):=(\mathbb{C}[{\rm U}_q(I)]\otimes\mathbb{C}[{\rm U}_q(J)])^*\cong \prod_{\lambda\in\mathbb{S}_I, \mu\in\mathbb{S}_J}\mathrm{End}(V_\lambda)\otimes\mathrm{End}(V_\mu),\]
and hence $W^*({\rm U}_q(I)\times {\rm U}_q(J))\cong W^*({\rm U}_q(I))\bar\otimes W^*({\rm U}_q(J))$ taking the bounded parts of the above two $*$-algebras, where $\bar\otimes$ denotes the operation of tensor products of $W^*$-algebras (see, e.g., \cite[Section III.1.5]{Blackadar:book}).

\begin{Lemma}\label{lem:commute}
 Let $I, J\in\mathcal{I}$. If $I\cap J=\varnothing$, then $W^*({\rm U}_q(I))$ commutes with $W^*({\rm U}_q(J))$. Namely, $(\mathfrak{M}({\rm U}_q), ({\rm U}_q(I))_{I\in\mathcal{I}})$ satisfies condition $(ql4)$.
\end{Lemma}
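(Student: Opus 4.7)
My plan is to reduce commutativity of $W^*(\mathrm{U}_q(I))$ and $W^*(\mathrm{U}_q(J))$ inside $\mathfrak{M}(\mathrm{U}_q)$ to commutativity inside a single group $W^*$-algebra. Since $I\cup J$ is finite, one can choose an interval $K\in\mathcal{I}$ with $I,J\subseteq K$; by the embedding properties already established in the paper, $\Theta^K_I$ and $\Theta^K_J$ then realize $W^*(\mathrm{U}_q(I))$ and $W^*(\mathrm{U}_q(J))$ as $W^*$-subalgebras of $W^*(\mathrm{U}_q(K))\subset\mathcal{U}(\mathrm{U}_q(K))$. Because the canonical map $W^*(\mathrm{U}_q(K))\to\mathfrak{M}(\mathrm{U}_q)$ is an injective $*$-homomorphism, it is enough to show that $\Theta^K_I(W^*(\mathrm{U}_q(I)))$ and $\Theta^K_J(W^*(\mathrm{U}_q(J)))$ commute inside $\mathcal{U}(\mathrm{U}_q(K))$.

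The proof will proceed at the dual level. The product in $\mathcal{U}(\mathrm{U}_q(K))$ is $(\varphi\cdot\psi)(a)=(\varphi\otimes\psi)(\delta_{\mathrm{U}_q(K)}(a))$ and $\Theta^K_I(x)=x\circ\theta^K_I$, so commutativity $\Theta^K_I(x)\cdot\Theta^K_J(y)=\Theta^K_J(y)\cdot\Theta^K_I(x)$ for all $x\in\mathcal{U}(\mathrm{U}_q(I))$, $y\in\mathcal{U}(\mathrm{U}_q(J))$ will follow from the identity of $*$-algebra homomorphisms $\mathbb{C}[\mathrm{U}_q(K)]\to\mathbb{C}[\mathrm{U}_q(I)]\otimes\mathbb{C}[\mathrm{U}_q(J)]$,
\[
\bigl(\theta^K_I\otimes\theta^K_J\bigr)\circ\delta_{\mathrm{U}_q(K)}=\sigma\circ\bigl(\theta^K_J\otimes\theta^K_I\bigr)\circ\delta_{\mathrm{U}_q(K)},
\]
where $\sigma$ is the tensor flip. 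Indeed, applying $x\otimes y$ to both sides and using $(x\otimes y)\circ\sigma=y\otimes x$ on scalar-valued functionals converts the left-hand side into $(\Theta^K_I(x)\cdot\Theta^K_J(y))(a)$ and the right-hand side into $(\Theta^K_J(y)\cdot\Theta^K_I(x))(a)$.

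Since both sides of the displayed identity are $*$-algebra homomorphisms, it suffices to verify it on the generators of $\mathbb{C}[\mathrm{U}_q(K)]$. For $u_{ij}$, substitute $\delta(u_{ij})=\sum_{k\in K}u_{ik}\otimes u_{kj}$ and run a case analysis on the positions of $i,j$: when $i,j\in I$ only $k=j$ contributes and produces $u_{ij}\otimes 1$; when $i,j\in J$ only $k=i$ contributes and produces $1\otimes u_{ij}$; when $i,j\in K\setminus(I\cup J)$ only $k=i=j$ contributes and produces $\delta_{ij}(1\otimes 1)$; and in the genuinely mixed case $i\in I$, $j\in J$ (or vice versa) a nontrivial contribution would require $k$ to lie simultaneously in $I$ and in $J$, so the hypothesis $I\cap J=\varnothing$ forces the sum to be zero. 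The same enumeration with $I$ and $J$ swapped yields the $\sigma$-flipped output, verifying the identity on $u_{ij}$; agreement on the extra generator $d_{q,K}^{-1}$ is automatic once one knows the quantum determinant is group-like, $\delta(d_{q,K}^K)=d_{q,K}^K\otimes d_{q,K}^K$. The only real obstacle is organizing the case analysis cleanly, but it is pure bookkeeping, and the whole argument hinges precisely on the vanishing of the cross terms when $I\cap J=\varnothing$.
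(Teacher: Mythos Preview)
Your proof is correct and is essentially the dual-side unpacking of the paper's argument. Both proofs embed into a common $W^*(\mathrm{U}_q(K))$ with $I,J\subseteq K$ and work with the map $(\theta^K_I\otimes\theta^K_J)\circ\delta_{\mathrm{U}_q(K)}$. The paper observes that this map realizes $\mathrm{U}_q(I)\times\mathrm{U}_q(J)$ as a quantum subgroup of $\mathrm{U}_q(K)$, hence dualizes to a $*$-homomorphism $\Theta^K_{I,J}\colon W^*(\mathrm{U}_q(I))\,\bar\otimes\, W^*(\mathrm{U}_q(J))\to W^*(\mathrm{U}_q(K))$ satisfying $\Theta^K_{I,J}(x\otimes y)=\Theta^K_I(x)\Theta^K_J(y)$; commutativity then follows because $x\otimes 1$ and $1\otimes y$ commute in the tensor product. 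You instead verify the flip identity $(\theta^K_I\otimes\theta^K_J)\circ\delta=\sigma\circ(\theta^K_J\otimes\theta^K_I)\circ\delta$ directly on generators. Your route is more elementary and makes transparent exactly where the hypothesis $I\cap J=\varnothing$ is used (the vanishing of the mixed terms $i\in I$, $j\in J$), while the paper's route is more conceptual but leaves that verification implicit in the assertion that $\mathrm{U}_q(I)\times\mathrm{U}_q(J)$ is a quantum subgroup.
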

\begin{proof}
 Let $K\in\mathcal{I}$ such that $I\sqcup J\subseteq K$. By $\big(\theta^K_I\otimes \theta^K_J\big)\circ\delta_{{\rm U}_q(K)}$, we can regard ${{\rm U}_q(I)\times {\rm U}_q(J)}$ as a~quantum subgroup ${\rm U}_q(K)$. Therefore, we obtain an injective $*$-homomorphism $\Theta^K_{I, J}$ from $W^*({\rm U}_q(I))\bar\otimes W^*({\rm U}_q(J))$ to $W^*({\rm U}_q(K))$ such that for any $x\in W^*({\rm U}_q(I))$ and $y\in W^*({\rm U}_q(J))$
 \[\big(\Theta^K_{I, J}\big)(x\otimes y)=(x\otimes y)\circ \big(\theta^K_I\otimes \theta^K_J\big)\circ\delta_{{\rm U}_q(K)}=\Theta^K_I(x)\Theta^K_J(y).\]
 Namely, $\Theta^K_I(W^*({\rm U}_q(I)))$ and $\Theta^K_J(W^*({\rm U}_q(J)))$ mutually commute in $W^*({\rm U}_q(K))$.
\end{proof}

\begin{Proposition}\label{prop:qlalg}
 The pair $(\mathfrak{M}({\rm U}_q), (W^*({\rm U}_q(I)))_{I\in\mathcal{I}})$ is a quasi-local algebra.
\end{Proposition}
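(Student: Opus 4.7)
The proposition is a straightforward verification of the four axioms (ql1)--(ql4), and most of the work has already been done in the preceding paragraphs and in Lemma \ref{lem:commute}. The plan is to go through the four conditions in order, pointing out which fact from the preceding setup supplies each one.

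First I would observe that the family $\{W^*({\rm U}_q(I))\}_{I\in\mathcal{I}}$ together with the connecting maps $\Theta^J_I$ (for $I\subseteq J$) forms an inductive system of unital $W^*$-algebras, with each $\Theta^J_I$ an injective unital $*$-homomorphism satisfying $\Theta^K_J\circ\Theta^J_I=\Theta^K_I$. Because each $\Theta^J_I$ is isometric, the canonical maps $W^*({\rm U}_q(I))\to\mathfrak{M}({\rm U}_q)$ into the $C^*$-inductive limit are also isometric, so we may identify $W^*({\rm U}_q(I))$ with its image. Under this identification, $I\subseteq J$ yields $W^*({\rm U}_q(I))\subseteq W^*({\rm U}_q(J))$, verifying (ql1). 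Since each $W^*({\rm U}_q(I))$ is itself a $W^*$-algebra (with its own predual), the subalgebras $M_I$ in the definition of a quasi-local algebra have the required dual structure.

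Next, (ql2) is automatic from the construction of $\mathfrak{M}({\rm U}_q)$ as a $C^*$-inductive limit: $\mathfrak{M}_0 = \bigcup_{I\in\mathcal{I}}W^*({\rm U}_q(I))$ is dense by definition. For (ql3), I would note that every connecting map $\Theta^J_I$ is unital, so all the embedded copies of $1\in W^*({\rm U}_q(I))$ coincide with the unit $1\in\mathfrak{M}({\rm U}_q)$. Finally, (ql4) is exactly the content of Lemma \ref{lem:commute}, which produces, for any $I,J\in\mathcal{I}$ with $I\cap J=\varnothing$ and any $K\supseteq I\sqcup J$, an embedding $\Theta^K_{I,J}\colon W^*({\rm U}_q(I))\bar\otimes W^*({\rm U}_q(J))\hookrightarrow W^*({\rm U}_q(K))$ whose image makes $W^*({\rm U}_q(I))$ and $W^*({\rm U}_q(J))$ commute inside $W^*({\rm U}_q(K))$, hence inside $\mathfrak{M}({\rm U}_q)$.

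There is no real obstacle: the only point that requires a moment's care is that the identification of $W^*({\rm U}_q(I))$ as a $C^*$-subalgebra of $\mathfrak{M}({\rm U}_q)$ is compatible with its $W^*$-structure, but this is immediate because each $\Theta^J_I$ is a normal injective $*$-homomorphism between $W^*$-algebras, so the embedding is isometric and preserves the $W^*$-algebra structure. Thus the verification assembles into a short proof that simply cites Lemma \ref{lem:commute} for (ql4) and the inductive-limit construction for (ql1)--(ql3).
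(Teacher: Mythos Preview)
Your proposal is correct and follows essentially the same approach as the paper: (ql1) and (ql2) come from the inductive-limit construction, (ql3) from unitality of the connecting maps $\Theta^J_I$, and (ql4) is exactly Lemma~\ref{lem:commute}. The paper's proof is terser but identical in substance.
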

\begin{proof}
 The first two conditions (ql1), (ql2) are clear. Since $\Theta^J_I$ is unital for any $I, J\in\mathcal{I}$ with $I\subseteq J$, any $W^*({\rm U}_q(I))$ have a common identity $1\in\mathfrak{M}$, that is, (ql3) holds. The condition (ql4) follows from Lemma \ref{lem:commute}.
\end{proof}

The quasi-local algebra $(\mathfrak{M}({\rm U}_q), (W^*({\rm U}_q(I)))_{I\in\mathcal{I}})$ also admits shift transformations.
\begin{Lemma}
 For every pair $I, J\in\mathcal{I}$ such that $|I|=|J|$, there exists a normal $*$-isomorphism $\gamma_{I, J}\colon W^*({\rm U}_q(J))\to W^*({\rm U}_q(I))$ satisfying $(\gamma 1)$, $(\gamma 2)$, $(\gamma 3)$ with respect to $\hat\tau$.
\end{Lemma}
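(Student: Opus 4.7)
The plan is to construct $\gamma_{I,J}$ by relabeling generators at the Hopf-algebra level. Since $|I|=|J|$, let $\sigma\colon I\to J$ denote the unique order-preserving bijection. The defining relations of $\mathbb{C}[{\rm U}_q(K)]$ listed in the previous subsection involve only the linear order on $K$ and the cardinality $|K|$: the $q$-commutation rules, the quantum minors $d_{q,L}^K$, and the $*$-structure formulas all depend solely on the inequalities $i<j$. Consequently, the assignment $u_{ij}\mapsto u_{\sigma(i)\sigma(j)}$ together with $d_{q,I}^{-1}\mapsto d_{q,J}^{-1}$ extends to a Hopf $*$-algebra isomorphism $\phi_{J,I}\colon\mathbb{C}[{\rm U}_q(I)]\to\mathbb{C}[{\rm U}_q(J)]$, and by universality to a $*$-isomorphism of CQG $C^*$-algebras $C({\rm U}_q(I))\cong C({\rm U}_q(J))$. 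I would define $\gamma_{I,J}$ as the dual, $\gamma_{I,J}(x):=x\circ\phi_{J,I}$ for $x\in\mathcal{U}({\rm U}_q(J))$. Because $\phi_{J,I}$ sends the matrix coefficients of the irreducible corepresentation of ${\rm U}_q(I)$ of highest weight $\lambda\in\mathbb{S}_I$ to those of the irreducible corepresentation of ${\rm U}_q(J)$ of the corresponding highest weight in $\mathbb{S}_J$, the dual $\gamma_{I,J}$ maps each $\mathrm{End}(V_\mu)$-block of $W^*({\rm U}_q(J))$ isomorphically onto the corresponding block of $W^*({\rm U}_q(I))$, and hence restricts to a normal $*$-isomorphism $W^*({\rm U}_q(J))\to W^*({\rm U}_q(I))$.

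Conditions $(\gamma 1)$ and $(\gamma 2)$ are formal consequences of naturality. For $(\gamma 1)$, the identities $\sigma_{I,I}=\mathrm{id}$ and $\sigma_{I,K}=\sigma_{I,J}\circ\sigma_{J,K}$ for composable order-preserving bijections translate to $\phi_{I,I}=\mathrm{id}$ and $\phi_{K,I}=\phi_{K,J}\circ\phi_{J,I}$, and dualize to the required relations on the $\gamma$'s. For $(\gamma 2)$, inspection of the generators shows $\theta^{J+j}_{I+j}\circ\phi_{J+j,J}=\phi_{I+j,I}\circ\theta^J_I$ (both sides send $u_{mn}$ to $u_{m+j,n+j}$ when $m,n\in I$ and to a Kronecker delta otherwise); dualizing and rearranging using $(\gamma 1)$ yields $\gamma_{J+j,J}\circ\Theta^J_I=\Theta^{J+j}_{I+j}\circ\gamma_{I+j,I}$, which under the embedding $M_I\subset M_J$ is precisely $(\gamma 2)$.

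For $(\gamma 3)$, I would appeal to the explicit formula $\rho_{{\rm U}_\mu}=\pi_\mu\bigl(K_{i_1}^{-|I|+1}K_{i_2}^{-|I|+3}\cdots K_{i_{|I|}}^{|I|-1}\bigr)$ recalled in the previous subsection. Under the nondegenerate dual pairing of \cite[Theorem 9.18]{KliSch}, the transpose $\gamma_{I,J}$ sends the generator $K_{\sigma(i_k)}\in{\rm U}_q\mathfrak{gl}_J$ to $K_{i_k}\in{\rm U}_q\mathfrak{gl}_I$ (and similarly for the $E_\ell$, $F_\ell$), while the exponents $-|I|+2k-1$ agree on the two sides because $|I|=|J|$. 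Hence $\gamma_{I,J}$ transports $\rho_{{\rm U}_\mu}$ on the $\mu$-block ($\mu\in\mathbb{S}_J$) to $\rho_{{\rm U}_\lambda}$ on the corresponding $\lambda$-block ($\lambda\in\mathbb{S}_I$), and since $\hat\tau_t$ acts blockwise by $\mathrm{Ad}\,\rho^{-{\rm i}t}$, the required intertwining $\gamma_{I,J}\circ\hat\tau^{{\rm U}_q(J)}_t=\hat\tau^{{\rm U}_q(I)}_t\circ\gamma_{I,J}$ follows. The only step going beyond routine bookkeeping is this identification of $\phi_{J,I}$ on the dual ${\rm U}_q\mathfrak{gl}$-side; it is a direct computation using the explicit form of the pairing in \cite[Theorem 9.18]{KliSch}, and is what I would expect to take the most care to write out cleanly.
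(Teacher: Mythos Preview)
Your proposal is correct and follows essentially the same route as the paper: the paper also builds $\gamma_{I,J}$ by dualizing the Hopf $*$-isomorphism $r_{I,J}\colon\mathbb{C}[{\rm U}_q(I)]\to\mathbb{C}[{\rm U}_q(J)]$ that relabels generators via the shift $I=J+k$ (your order-preserving $\sigma$), and verifies $(\gamma1)$, $(\gamma2)$ by the same functoriality identities $r_{J,K}\circ r_{I,J}=r_{I,K}$ and $\theta^{J+k}_{I+k}\circ r_{J,J+k}=r_{I,I+k}\circ\theta^J_I$. The only difference is in $(\gamma3)$: the paper simply invokes the quantum-subgroup machinery already set up for the $\Theta^J_I$ (a Hopf $*$-isomorphism intertwining the coproducts automatically intertwines the scaling groups, cf.\ the displayed relation $\Theta^J_I\circ\hat\tau^{{\rm U}_q(I)}_t=\hat\tau^{{\rm U}_q(J)}_t\circ\Theta^J_I$ and \cite{Sato1}), whereas you verify it by hand via the explicit formula $\rho_{{\rm U}_\lambda}=\pi_\lambda\bigl(K_{i_1}^{-|I|+1}\cdots K_{i_{|I|}}^{|I|-1}\bigr)$ and the dual pairing; both arguments are valid and yield the same conclusion.
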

\begin{proof}
 We remark that there is $k\in\mathbb{Z}$ such that $I=J+k$. By definition, we obtain a unital $*$-isomorphism $r_{I, J}\colon \mathbb{C}[{\rm U}_q(I)]\to\mathbb{C}[{\rm U}_q(J)]$ by $r_{I, J}(u_{i+k, j+k}):=u_{ij}$ ($i, j\in J$) and $r_{I, J}\big(d_{q, I}^{-1}\big)=d_{q, J}^{-1}$, and we have
 \[\delta_{{\rm U}_q(J)}\circ r_{I, J}=(r_{I, J}\otimes r_{I, J})\circ\delta_{{\rm U}_q(I)}, \qquad \delta_{{\rm U}_q(I)}\circ r_{J, I}=(r_{J, I}\otimes r_{J, I})\circ\delta_{{\rm U}_q(J)},\]
 where $r_{J, I}:=r_{I, J}^{-1}$. Thus, ${\rm U}_q(I)$ and ${\rm U}_q(J)$ can be regarded as a quantum subgroup of each other. Therefore, there exists an injective normal $*$-isomorphism $\gamma_{I, J}\colon W^*({\rm U}_q(J))\to W^*({\rm U}_q(I))$ satisfying $(\gamma 3)$, that is, \smash{$\hat\tau^{{\rm U}_q(I)_t}\circ\gamma_{I, J}=\gamma_{I, J}\circ\hat\tau^{{\rm U}_q(J)}_t$} for any $t\in\mathbb{R}$. Let $I, J, K\in\mathcal{I}$. If ${|I|=|J|=|K|}$, then we have $\gamma_{I, J}\circ \gamma_{J, K}=\gamma_{I, K}$ since $r_{J, K}\circ r_{I, J}=r_{I, K}$. Namely, $(\gamma 1)$ holds true. If $I\subseteq J$, then we have $\theta^{J+k}_{I+k}\circ r_{J, J+k}=r_{I, I+k}\circ\theta^J_I$ for any $k\in\mathbb{Z}$ such that $J+k\in\mathcal{I}$. Thus, we have \smash{$\gamma_{J, J+k}\circ \Theta_{I+k}^{J+k}=\Theta^J_I\circ \gamma_{I, I+k}$}, that is, $(\gamma 2)$ holds true.
\end{proof}

Here we recall the observation by Ueda \cite[Section 4.4.5]{Ueda22}. We remark that the direction of the parameter $t$ in \smash{$\big(\hat\tau^{{\rm U}_q(I)}_t\big)_{t\in\mathbb{R}}$} is inverse from \cite{Ueda22}. Due to this, the parameter $q$ is replaced by $q^{-1}$ in the following computation. Since $W^*({\rm U}_q(I))\cong\ell^\infty\mathchar`-\bigoplus_{\lambda\in\mathbb{S}_I}\mathrm{End}(V_\lambda)$, the center~$Z(W^*({\rm U}_q(I)))$ of $W^*({\rm U}_q(I))$ is $*$-isomorphic to $\ell^\infty(\mathbb{S}_I)$. Let $z_\lambda$ ($\lambda\in\mathbb{S}_I$) denote the minimal projection in $Z(W^*({\rm U}_q(I)))$ corresponding to the Dirac function $\delta_\lambda\in\ell^\infty(\mathbb{S}_I)$. Namely, we have~$\pi_\mu(z_\lambda)=\delta_{\lambda, \mu}1_{\mathrm{End}(V_\mu)}$ for any $\mu\in\mathbb{S}_I$ and $z_\lambda W^*({\rm U}_q(I))\cong \mathrm{End}(V_\lambda)$. Moreover, the unique~\smash{$\big(\hat\tau^{{\rm U}_q(I)}|_{z_\lambda W^*({\rm U}_q(I))}, -1\big)$}-KMS state $\chi_{z_\lambda}$ is given by
\[\chi_{z_\lambda}(x):=\frac{\mathrm{Tr}_{V_\lambda}\big(\rho_{{\rm U}_\lambda}^{-1}\pi_\lambda(x)\big)}{\mathrm{Tr}_{V_\lambda}\big(\rho_{{\rm U}_\lambda}^{-1}\big)}=\frac{\mathrm{Tr}_{V_\lambda}\big(\rho_{{\rm U}_\lambda}^{-1}\pi_\lambda(x)\big)}{d_q(\lambda)},\qquad x\in z_\lambda W^*({\rm U}_q(I)).\]
Now we assume that $I, J\in\mathcal{I}$ are disjoint and $I\sqcup J$ also in $\mathcal{I}$. Let $I\sqcup J=\{i_1<\cdots <i_{|I|+|J|}\}$ and $I=\{i_1<\cdots <i_{|I|}\}$. Namely, we have $J=\{i_{|I|+1}<\cdots <i_{|I|+|J|}\}$. For any $\nu\in\mathbb{S}_{I\sqcup J}$ the restriction of $(\pi_\nu, V_\nu)$ to ${\rm U}_q\mathfrak{gl}_I\otimes {\rm U}_q\mathfrak{gl}_J$ decompose into the direct sum
\[\bigoplus_{\lambda\in\mathbb{S}_I, \mu\in\mathbb{S}_J}(\pi_\lambda\otimes\pi_\mu, V_\lambda\otimes V_\mu)^{\oplus c^\nu_{\lambda, \mu}},\]
where the multiplicity $c^\nu_{\lambda, \mu}$ is the same in the classical case and determined by
\begin{equation}\label{eq:panpkin}
 s_\nu(x_1,\dots,x_{|I|+|J|})=\sum_{\lambda\in\mathbb{S}_I, \mu\in\mathbb{S}_J}c^\nu_{\lambda, \mu}s_\lambda(x_1,\dots, x_{|I|})s_\mu(x_{|I|+1},\dots, x_{|I|+|J|}).
\end{equation}
See, e.g., \cite[Proposition 5.4]{DS}. Therefore, for any $\lambda\in\mathbb{S}_I$ and $\mu\in\mathbb{S}_J$ we have
\begin{align}\label{eq:jenny}
 \chi_{z_\nu}(z_\lambda z_\mu)
 & =\frac{1}{d_q(\nu)}\mathrm{Tr}_{V_\lambda}\big(\pi_\nu\big(K_{i_1}^{|I|+|J|-1}\cdots K_{i_{|I|+|J|}}^{-|I|-|J|+1}\big)\pi_\nu(z_\lambda z_\mu)\big)\nonumber \\
 & =\frac{c^\nu_{\lambda, \mu}}{d_q(\nu)}\mathrm{Tr}_{V_\lambda}\big(\pi_\lambda\big(K_{i_1}^{|I|+|J|-1}\cdots K_{i_{|I|}}^{-|I|+|J|+1}\big)\big)\nonumber\\
 &\quad \times \mathrm{Tr}_{V_\mu}\big(\pi_\mu\big(K_{i_{|I|+1}}^{-|I|+|J|-1}\cdots K_{i_{|I|+|J|}}^{-|I|-|J|+1}\big)\big)\nonumber \\
 & =\frac{c^\nu_{\lambda, \mu}}{d_q(\nu)}s_\lambda\big(q^{|I|+|J|-1},\dots,q^{-|I|+|J|+1}\big)s_\mu\big(q^{-|I|+|J|-1}, \dots, q^{-|I|-|J|+1}\big)\nonumber \\
 & =c^\nu_{\lambda, \mu}q^{|\lambda||J|-|\mu||I|}\frac{d_q(\lambda)d_q(\mu)}{d_q(\nu)},
\end{align}
where $|\lambda|:=\lambda_{i_1}+\dots+\lambda_{i_{|I|}}$ for any $\lambda\in \mathbb{S}_I$ and $I=\{i_1<\dots <i_{|I|}\}$.

\subsection[Examples of multiplicative states on M(U\_q)]{Examples of multiplicative states on $\boldsymbol{\mathfrak{M}({\rm U}_q)}$}\label{sect:mult}
Here we give examples of multiplicative states on the quasi-local algebra from the quantum unitary groups ${\rm U}_q(I)$. In this section, let $\mathcal{I}$ be the set of all finite intervals of $\mathbb{N}$,

We first recall some facts about characters of ${\rm U}(\infty)=\varinjlim_{I\in\mathcal{I}}{\rm U}(I)$. We endow ${\rm U}(\infty)$ with the inductive limit topology, that is, a function $f$ on ${\rm U}(\infty)$ is continuous if and only if $f|_{{\rm U}(I)}$ is continuous on ${\rm U}(I)$ for any $I\in\mathcal{I}$. A continuous function $f$ on ${\rm U}(\infty)$ is called a \emph{character} of~${\rm U}(\infty)$ if $f$ is positive-definite, central (i.e., $f(uv)=f(vu)$ for any $u, v\in {\rm U}(\infty)$), and $f(e)=1$, where $e\in {\rm U}(\infty)$ is the identity element. By definition, the set of all characters of~${\rm U}(\infty)$ is a convex set, and it is known that every extreme character $f$ of ${\rm U}(\infty)$ is multiplicative, that is, $f(uv)=f(u)f(v)$ for any $u\in {\rm U}(I)$ and $v\in U(J)$ if $I, J\in\mathcal{I}$ are disjoint (see \cite{Voiculescu74}). Moreover, it is known the following explicit description of the extreme characters of ${\rm U}(\infty)$ (see~\cite{BO12,Boyer83,OkounkovOlshanski,Petrov14,VK82,Voiculescu76}): There exists a bijection between the set of all extreme characters of~${\rm U}(\infty)$ and the set $\Omega$ consisting of~$\omega=\big(\alpha^+, \beta^+, \alpha^-, \beta^-, \gamma^+, \gamma^-\big)\in(\mathbb{R}_{\geq0}^\infty)^4\times\mathbb{R}_{\geq0}^2$ such that
\begin{gather*}
\alpha^\pm=\big(\alpha^\pm_1\geq\alpha^\pm_2\geq\cdots\big), \qquad \beta^\pm=\big(\beta^\pm_1\geq\beta^\pm_2\geq\cdots\big),\\
\sum_{i=1}^\infty\big(\alpha^\pm_i+\beta^\pm_i\big)<\infty, \qquad \beta^+_1+\beta^-_1\leq1.\end{gather*}
More precisely, for any $\omega\in \Omega$ the corresponding extreme character $f_\omega$ is given as
\[f_\omega(u):=\prod_{z}\Phi_\omega(z),\qquad u\in {\rm U}(\infty),\]
where $z$ runs over all eigenvalues of $u$ and
\[\Phi_\omega(z):={\rm e}^{\gamma^+(z-1)+\gamma^-(z^{-1}-1)}\prod_{i=1}^\infty\frac{1+\beta^+_i(z-1)}{1-\alpha^+_i(z-1)}\frac{1+\beta^-_i(z^{-1}-1)}{1-\alpha^-_i(z^{-1}-1)}.\]

For $q\in (0, 1]$, let $\Omega_q$ denote the set of $\omega\in \Omega$ satisfying that $\Phi_\omega\big(q^{-2i}\big)>0$ for any $i\in\mathbb{Z}_{\geq0}$ and the Laurent expansion of $\Phi_\omega(z)$ converges in a domain containing $q^{-2i}$ for all $i\in\mathbb{Z}_{\geq0}$. Clearly, we have $\Omega_1=\Omega$. By \cite[Lemma 7.1]{Sato4}, there exists a unique locally normal $(\hat\tau, -1)$-KMS state~$\chi_\omega^{(q)}$ on $\mathfrak{M}({\rm U}_q)$ such that for any $I\in\mathcal{I}$ and $(z_i)_{i\in I}\in\mathbb{T}^I$
\begin{equation}\label{eq:char_omega}
 \prod_{i\in I}\frac{\Phi_\omega\big(q^{-2(i-1)}z_i\big)}{\Phi_\omega\big(q^{-2(i-1)}\big)}=
 \sum_{\lambda\in\mathbb{S}_I}\frac{\chi_\omega^{(q)}(z_\lambda)}
 {d_q(\lambda)}s_\lambda\big(q^{|I|-1}z_{i_1}, q^{|I|-3}z_{i_2},\dots, q^{-|I|+1}z_{i_{|I|}}\big),
\end{equation}
where $d_q(\lambda):=s_\lambda\big(q^{|I|-1}, q^{|I|-3},\dots, q^{-|I|+1}\big)$ and $I=\{i_1<i_2<\dots<i_{|I|}\}$.

\begin{Remark}
 For $q=1$, let $W^*({\rm U}_q(I))=W^*({\rm U}(I))$. In this case, $\hat\tau$ is trivial, and hence $(\hat\tau, -1)$-KMS states are nothing but tracial states on $\mathfrak{M}(U):=\varinjlim_{I\in\mathcal{I}}W^*({\rm U}(I))$. We recall that there exists a bijection between the set of all locally normal tracial states on $\mathfrak{M}(U)$ and the set of all characters of ${\rm U}(\infty)$. Moreover, $\chi_\omega^{(1)}$ corresponds to the characters $f_\omega$.
\end{Remark}

\begin{Remark}
 The $W^*$-inductive limit of $(W^*({\rm U}_q(I)))_{I\in\mathcal{I}}$ naturally has a structure like a~Woro\-nowicz algebra, and hence we can regard this as an inductive limit quantum group, called the \emph{infinite-dimensional quantum unitary group} (see \cite{Sato3}). The $\mathbb{R}$-flow $\hat\tau$ extends to the $W^*$-inductive limit, and a normal $(\hat\tau, -1)$-KMS state is called a \emph{quantized character}. Moreover, there exists a~bijection between the quantized characters and the locally normal $(\hat\tau, -1)$-KMS states on~$\mathfrak{M}({\rm U}_q)$.
\end{Remark}

\begin{Proposition}\label{prop:main3}
 For any $q\in(0, 1]$ and $\omega\in\Omega_q$, the corresponding state $\chi_\omega^{(q)}$ on $\mathfrak{M}({\rm U}_q)$ is multiplicative.
\end{Proposition}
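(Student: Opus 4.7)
The plan is to reduce to Theorem~\ref{thm:main1} and then extract the needed identity directly from the defining generating function~(\ref{eq:char_omega}). Since the center $Z(W^*({\rm U}_q(I)))$ is linearly spanned by the minimal projections $z_\lambda$ ($\lambda\in\mathbb{S}_I$), Theorem~\ref{thm:main1} reduces the problem to proving
\[
\chi_\omega^{(q)}(z_\lambda z_\mu)=\chi_\omega^{(q)}(z_\lambda)\,\chi_\omega^{(q)}(z_\mu)
\]
for all disjoint $I,J\in\mathcal{I}$ with $I\sqcup J\in\mathcal{I}$, $\lambda\in\mathbb{S}_I$, and $\mu\in\mathbb{S}_J$.

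For the right-hand side, I apply~(\ref{eq:char_omega}) to the interval $I\sqcup J$ in two different ways. First, the left-hand side of~(\ref{eq:char_omega}) for $I\sqcup J$ factorizes as the product of the left-hand sides of~(\ref{eq:char_omega}) for $I$ and for $J$ separately (with the same variables $z_k$); re-expanding each factor via~(\ref{eq:char_omega}) applied to the smaller interval yields a double Schur expansion whose coefficient of $s_\lambda(q^{|I|-1}z_{i_1},\dots,q^{-|I|+1}z_{i_{|I|}})\,s_\mu(q^{|J|-1}z_{i_{|I|+1}},\dots,q^{-|J|+1}z_{i_{|I|+|J|}})$ is $\chi_\omega^{(q)}(z_\lambda)\chi_\omega^{(q)}(z_\mu)/(d_q(\lambda)d_q(\mu))$. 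Second, the right-hand side of~(\ref{eq:char_omega}) for $I\sqcup J$ can be expanded via the branching rule~(\ref{eq:panpkin}) together with the homogeneity $s_\lambda(cx_1,\dots,cx_n)=c^{|\lambda|}s_\lambda(x_1,\dots,x_n)$, which converts the shifted specializations $(q^{|I|+|J|-1}z_{i_1},\dots,q^{-|I|+|J|+1}z_{i_{|I|}})$ and $(q^{-|I|+|J|-1}z_{i_{|I|+1}},\dots,q^{-|I|-|J|+1}z_{i_{|I|+|J|}})$ arising from the $I\sqcup J$-specialization into the natural $I$- and $J$-specializations, at the price of multiplicative factors $q^{|J|\,|\lambda|}$ and $q^{-|I|\,|\mu|}$ respectively. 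Linear independence of products of Schur Laurent polynomials in the variables $z_k$ identifies the two coefficients, yielding
\[
\frac{\chi_\omega^{(q)}(z_\lambda)\,\chi_\omega^{(q)}(z_\mu)}{d_q(\lambda)\,d_q(\mu)}=q^{|\lambda|\,|J|-|\mu|\,|I|}\sum_\nu\frac{\chi_\omega^{(q)}(z_\nu)\,c^\nu_{\lambda,\mu}}{d_q(\nu)}.
\]

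For the left-hand side, local normality of $\chi_\omega^{(q)}$, together with the uniqueness of the normal $(\hat\tau|_{z_\nu W^*({\rm U}_q(I\sqcup J))},-1)$-KMS state, forces $\chi_\omega^{(q)}|_{z_\nu W^*({\rm U}_q(I\sqcup J))}=\chi_\omega^{(q)}(z_\nu)\,\chi_{z_\nu}$, so $\chi_\omega^{(q)}(z_\lambda z_\mu)=\sum_\nu\chi_\omega^{(q)}(z_\nu)\chi_{z_\nu}(z_\lambda z_\mu)$; substituting~(\ref{eq:jenny}) reproduces exactly the right-hand side of the displayed identity above. This matches the computed value of $\chi_\omega^{(q)}(z_\lambda)\chi_\omega^{(q)}(z_\mu)$, and Theorem~\ref{thm:main1} then delivers multiplicativity of $\chi_\omega^{(q)}$.

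The only delicate step in this plan is the $q$-power bookkeeping: the exponent $q^{|\lambda|\,|J|-|\mu|\,|I|}$ arising from Schur homogeneity—when converting the specialization natural for $I\sqcup J$ into those natural for $I$ and $J$—must exactly match the factor in~(\ref{eq:jenny}), and this matching is what makes the two sides agree. Everything else is combinatorics packaged in the branching rule and a direct appeal to Theorems~\ref{thm:main1} and the generating-function characterization~(\ref{eq:char_omega}).
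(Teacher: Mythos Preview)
Your proposal is correct and follows essentially the same route as the paper: reduce to Theorem~\ref{thm:main1}, expand the generating function~(\ref{eq:char_omega}) over $I\sqcup J$ in two ways (once via the branching rule~(\ref{eq:panpkin}) and the computation~(\ref{eq:jenny}), once by factoring over $I$ and $J$), and compare Schur coefficients. The only difference is cosmetic: you make explicit the identity $\chi_\omega^{(q)}(z_\lambda z_\mu)=\sum_\nu\chi_\omega^{(q)}(z_\nu)\chi_{z_\nu}(z_\lambda z_\mu)$ via KMS uniqueness, which the paper absorbs silently into its chain of equalities.
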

\begin{proof}
 By Theorem \ref{thm:main1}, it suffices to show that $\chi_\omega^{(q)}(z_\lambda z_\mu)=\chi_\omega^{(q)}(z_\lambda)\chi_\omega^{(q)}(z_\mu)$ for any $\lambda\in\mathbb{S}_I$, $\mu\in\mathbb{S}_J$ if $I\sqcup J\in\mathcal{I}$. We suppose that $I\sqcup J=\{i_1<\dots <i_{|I|+|J|}\}$ and $I=\{i_1<\dots <i_{|I|}\}$. Namely, we have $J=\{i_{|I|+1}<\dots <i_{|I|+|J|}\}$. By equations \eqref{eq:panpkin}, \eqref{eq:jenny}, \eqref{eq:char_omega}, we have
 \begin{align*}
 & \prod_{i\in I\sqcup J}\frac{\Phi_\omega\big(q^{-2(i-1)}z_i\big)}{\Phi_\omega\big(q^{-2(i-1)}\big)} \\ &\qquad=\sum_{\nu\in\mathbb{S}_{I\sqcup J}}\frac{\chi_\omega^{(q)}(z_\nu)}{d_q(\nu)}s_\nu\big(q^{|I|+|J|-1}z_{i_1},\dots, q^{-|I|-|J|+1}z_{i_{|I|+|J|}}\big) \\
 &\qquad =\sum_{\lambda\in\mathbb{S}_I, \mu\in\mathbb{S}_J}\bigg(\sum_{\nu\in\mathbb{S}_{I\sqcup J}}\chi_\omega^{(q)}(z_\nu)\frac{c^\nu_{\lambda, \mu}}{d_q(\nu)}\bigg)q^{|\lambda||J|-|\mu||I|} \\
 & \qquad\qquad\qquad \times s_\lambda\big(q^{|I|-1}z_{i_1},\dots, q^{-|I|+1}z_{i_{|I|}}\big)s_\mu\big(q^{|J|-1}z_{i_{|I|+1}}, \dots, q^{-|J|+1}z_{i_{|I|+|J|}}\big) \\
 &\qquad =\sum_{\lambda\in\mathbb{S}_I, \mu\in\mathbb{S}_J}\frac{1}{d_q(\lambda)d_q(\mu)}\bigg(\sum_{\nu\in\mathbb{S}_{I\sqcup J}}\chi_\omega^{(q)}(z_\nu)\chi_{z_\nu}(z_\lambda z_\mu)\bigg) \\
 &\qquad \qquad\qquad \times s_\lambda\big(q^{|I|-1}z_{i_1},\dots, q^{-|I|+1}z_{i_{|I|}}\big)s_\mu\big(q^{|J|-1}z_{i_{|I|+1}}, \dots, q^{-|J|+1}z_{i_{|I|+|J|}}\big) \\
 & \qquad=\sum_{\lambda\in\mathbb{S}_I, \mu\in\mathbb{S}_J}\frac{\chi_\omega^{(q)}(z_\lambda z_\mu)}{d_q(\lambda)d_q(\mu)} \\
 &\qquad \qquad\qquad \times s_\lambda\big(q^{|I|-1}z_{i_1},\dots, q^{-|I|+1}z_{i_{|I|}}\big)s_\mu\big(q^{|J|-1}z_{i_{|I|+1}}, \dots, q^{-|J|+1}z_{i_{|I|+|J|}}\big).
 \end{align*}
 On the other hand, we have
 \begin{align*}
 & \prod_{i\in I\sqcup J}\frac{\Phi_\omega\big(q^{-2(i-1)}z_i\big)}{\Phi_\omega\big(q^{-2(i-1)}\big)} \\
 &\qquad =\sum_{\lambda\in\mathbb{S}_I, \mu\in\mathbb{S}_J}\frac{\chi_\omega^{(q)}(z_\lambda)\chi_\omega^{(q)}(z_\mu)}{d_q(\lambda)d_q(\mu)}\\
 &\qquad\qquad\times s_\lambda\big(q^{|I|-1}z_{i_1},\dots, q^{-|I|+1}z_{i_{|I|}}\big)s_\mu\big(q^{|J|-1}z_{i_{|I|+1}}, \dots, q^{-|J|+1}z_{i_{|I|+|J|}}\big).
 \end{align*}
 Therefore, we have $\chi_\omega^{(q)}(z_\lambda z_\mu)=\chi_\omega^{(q)}(z_\lambda)\chi_\omega^{(q)}(z_\mu)$.
\end{proof}

The restriction of $\chi_\omega^{(q)}$ to $W^*({\rm U}_q(I))$ is a quantized character of ${\rm U}_q(I)$. By equation \eqref{eq:char_omega}, it is multiplicative on the ``maximal tori'' of ${\rm U}_q(I)$. By Proposition \ref{prop:main3}, the state $\chi_\omega^{(q)}$, which gives a~quantized characters having a multiplicative form on the maximal tori, is really multiplicative in the sense of Definition \ref{def:multiplicative}. In the classical case ($q=1$), the multiplicative characters are extreme characters. On the other hand, most of $\chi_\omega^{(q)}$ is not extreme quantized character if~${0<q<1}$. Thus, the $\chi_\omega^{(q)}$ and Proposition \ref{prop:main3} seem to indicate an essential difference in the character theory of ${\rm U}(\infty)$ and ${\rm U}_q(\infty)$.

\section{Quasi-local algebras and Gaussian fluctuation limits}\label{sect:4}
In this section, we mention a central limit theorem with respect to multiplicative states on quasi-local algebras (see Theorem \ref{theorem:gaussian}), and its proof is given in Appendix \ref{app:proof}. We mainly refer to the book by Petz \cite{Petz:book} about Weyl CCR algebras and quasi-free states.

Throughout this section and Appendix \ref{app:proof}, let $\mathcal{I}$ be the set of all finite intervals in $\mathbb{Z}$ and $(\mathfrak{M}, (M_I)_{I\in\mathcal{I}})$ a quasi-local algebra. Moreover, we follow the same setup in Section \ref{sect:2}. Namely, $\mathfrak{M}$ has an action $\alpha\colon \mathbb{R}\curvearrowright\mathfrak{M}$ such that $\alpha_t(M_I)=M_I$ and $\lim_{t\to0}\|\omega\circ\alpha_t-\omega\|=0$ hold for any~$I\in\mathcal{I}$ and $\omega\in(M_I)_*$. In addition, we suppose that $\mathfrak{M}$ admits a shift translations. We remark that this assumption is equivalent to the existence of action $\gamma\colon \mathbb{Z}\curvearrowright \mathfrak{M}$ such that $\gamma$ commutes with~$\alpha$ and~$\gamma_j(M_I)=M_{I+j}$ holds for any $I\in\mathcal{I}$ and $j\in\mathbb{Z}$. Clearly, $\gamma_j$ preserves~$\mathfrak{M}_0:=\bigcup_{I\in\mathcal{I}}M_I$ for every $j\in\mathbb{Z}$. Throughout this section, we fix a $\gamma$-invariant multiplicative state $\chi$ on $\mathfrak{M}$. Moreover, we fix an increasing sequence $(I_n)_{n=1}^\infty$ in $\mathcal{I}$ such that $\bigcup_{n=1}^\infty I_n=\mathbb{Z}$.

For any $x\in\mathfrak{M}$ and $I\in\mathcal{I}$ the \emph{local fluctuation} $F_I(x)$ of $x$ on $I$ is defined by
\[F_I(x):=\frac{1}{\sqrt{|I|}}\sum_{j\in I}(\gamma_j(x)-\chi(x)1).\]
Let $F_n(x):=F_{I_n}(x)$ for every $n=1, 2, \dots$. Since $\chi(x\gamma_j(y))=\chi(x)\chi(y)$ if $|j|$ is sufficiently large, we have $\sum_{j\in\mathbb{Z}}|\chi(x\gamma_j(y))-\chi(x)\chi(y)|<\infty$. Thus, the real vector space $\mathfrak{M}_{0, {\rm sa}}$ has the symmetric bilinear form $s_\chi$ and the symplectic form $\sigma_\chi$ given by
\begin{gather*}
 s_\chi(x, y):=\operatorname{Re}\sum_{k\in\mathbb{Z}}(\chi(x\gamma_k(y))-\chi(x)\chi(y)),\qquad\!\! \sigma_\chi(x, y):=-\operatorname{Im}\sum_{k\in\mathbb{Z}}(\chi(x\gamma_k(y))-\chi(x)\chi(y))
\end{gather*}
for any $x, y\in\mathfrak{M}_{0, {\rm sa}}$. We denote by $\mathfrak{W}_\chi$ the \emph{Weyl CCR algebra} associated with the symplectic space $(\mathfrak{M}_{0, {\rm sa}}, \sigma_\chi)$, that is, $\mathfrak{W}_\chi$ is the universal $C^*$-algebra generated by $w(x)$ ($x\in\mathfrak{M}_{0, {\rm sa}}$) subject to the relations
\[w(x)^*=w(-x),\qquad w(x)w(y)={\rm e}^{\sigma_\chi(x, y)}w(x+y),\qquad x, y\in\mathfrak{M}_{0, {\rm sa}}.\]
We remark that the kernel $(x, y)\mapsto s_\chi(x, y)-\mathrm{i}\sigma_\chi(x, y)$ on $\mathfrak{M}_{0, {\rm sa}}$ is positive definite since $s_\chi(x, y)-\mathrm{i}\sigma_\chi(x, y)=\lim_{n\to\infty}\chi(F_n(x)F_n(y))$. Thus, we obtain a quasi-free state $\varphi_\chi$ on $\mathfrak{W}_\chi$ such that $\varphi_\chi(w(x))={\rm e}^{-s_\chi(x, x)/2}$ for any $x\in \mathfrak{M}_{0, {\rm sa}}$.

The following is the main theorem in this section and the proof is given in Appendix \ref{app:proof}.
\begin{Theorem}\label{theorem:gaussian}
 For any $k\in\mathbb{N}$ and $x_1, \dots, x_k\in\mathfrak{M}_{0, {\rm sa}}$, we have
 \[\lim_{n\to\infty}\chi\big({\rm e}^{\mathrm{i}F_n(x_1)}\cdots {\rm e}^{\mathrm{i}F_n(x_k)}\big)=\varphi_\chi(w(x_1)\cdots w(x_k)).\]
\end{Theorem}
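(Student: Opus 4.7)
The plan is to follow the quantum central limit strategy of Goderis--Verbeure--Vets and Matsui, exploiting the fact that multiplicativity of $\chi$ gives \emph{exact} factorization across disjoint supports rather than the usual asymptotic clustering; this eliminates the delicate tail estimates of the standard proof.

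\textbf{Step 1 (reduction and explicit target).} Replacing each $x_l$ by $x_l-\chi(x_l)1$ multiplies both sides of the claim by the same explicit scalar, so I may assume $\chi(x_l)=0$ for every $l$. Iterating $w(x)w(y)=\mathrm{e}^{\sigma_\chi(x,y)}w(x+y)$ and applying $\varphi_\chi(w(z))=\mathrm{e}^{-s_\chi(z,z)/2}$ yields the explicit Gaussian target
\[
\varphi_\chi\bigl(w(x_1)\cdots w(x_k)\bigr)=\exp\Bigl(\sum_{i<j}\sigma_\chi(x_i,x_j)-\tfrac{1}{2}s_\chi(X,X)\Bigr),\qquad X:=x_1+\cdots+x_k.
\]

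\textbf{Step 2 (Wick-type moment convergence).} The heart of the argument is to prove, for every $m\ge 1$ and every tuple $(i_1,\ldots,i_m)\in\{1,\ldots,k\}^m$, the identity
\[
\lim_{n\to\infty}\chi\bigl(F_n(x_{i_1})\cdots F_n(x_{i_m})\bigr)=\sum_{\pi\in\mathcal{P}_2(m)}\prod_{\{a<b\}\in\pi}\bigl(s_\chi(x_{i_a},x_{i_b})-\mathrm{i}\sigma_\chi(x_{i_a},x_{i_b})\bigr),
\]
where $\mathcal{P}_2(m)$ denotes the pair partitions of $\{1,\ldots,m\}$ (the limit being zero for odd $m$). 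Expanding each $F_n$ produces $|I_n|^{-m/2}\sum_{j_1,\ldots,j_m\in I_n}\chi(\gamma_{j_1}(x_{i_1})\cdots\gamma_{j_m}(x_{i_m}))$. Fix a finite interval $J_0$ containing the supports of all $x_l$, so that $\gamma_{j_r}(x_{i_r})\in M_{j_r+J_0}$. Group the indices $1,\ldots,m$ into the minimal clusters within which the translates $j_r+J_0$ overlap, so that distinct clusters are supported in disjoint intervals. By (ql4) and Definition~\ref{def:multiplicative}, $\chi$ of the full product factors as a product of $\chi$'s over clusters; since $\chi(\gamma_{j_r}(x_{i_r}))=0$, only configurations every one of whose clusters has size at least $2$ survive.

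\textbf{Step 3 (counting and identification).} A counting argument shows that configurations with all clusters of size exactly $2$ contribute $\Theta(|I_n|^{m/2})$ terms, matching the normalization, while configurations containing a cluster of size $\ge 3$ contribute only $O(|I_n|^{(m-1)/2})$ and hence drop out. For a fixed pair partition $\pi$, summing over the relative separations within each pair $\{a<b\}$ recovers the finite sum $\sum_{k\in\mathbb{Z}}\chi(x_{i_a}\gamma_k(x_{i_b}))$, which equals $s_\chi(x_{i_a},x_{i_b})-\mathrm{i}\sigma_\chi(x_{i_a},x_{i_b})$ under $\chi(x_l)=0$; summing over $\pi$ reproduces the Wick formula for the quasi-free state $\varphi_\chi$, which in turn matches the Taylor coefficients of the target from Step 1.

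\textbf{Step 4 (characteristic functions) and main obstacle.} The left-hand side is bounded by $1$ (a state on a product of unitaries), and Step 2 yields Gaussian-type moment bounds $|\chi(F_n(x_{i_1})\cdots F_n(x_{i_m}))|\le C^{m}m^{m/2}$ uniform in $n$; these are summable after division by $m!$ and therefore justify termwise passage to the limit in the Taylor series of each $\mathrm{e}^{\mathrm{i}F_n(x_l)}$, identifying the limit with the analogous expansion of $\varphi_\chi(w(x_1)\cdots w(x_k))$. The main obstacle lies in Step 3: keeping track of the ordering $a<b$ within each pair so as to obtain $s_\chi-\mathrm{i}\sigma_\chi$ with the correct sign (rather than its complex conjugate), and handling the $O(1)$ boundary contributions from pairs having one endpoint near $\partial I_n$. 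Multiplicativity makes the kernels $\sum_k(\chi(x\gamma_k(y))-\chi(x)\chi(y))$ genuinely finite sums, so both issues become elementary compared with the GVV setting.
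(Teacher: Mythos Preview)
Your Steps 1--3 are sound and would indeed establish the Wick-formula moment convergence
\[
\lim_{n\to\infty}\chi\bigl(F_n(x_{i_1})\cdots F_n(x_{i_m})\bigr)=\varphi_\chi\bigl(b(x_{i_1})\cdots b(x_{i_m})\bigr)
\]
for each fixed $m$; multiplicativity of $\chi$ makes the cluster expansion exact and the counting in Step~3 is routine. The real gap is Step~4. You assert a uniform bound $|\chi(F_n(x_{i_1})\cdots F_n(x_{i_m}))|\le C^{m}m^{m/2}$ and say it is ``yielded'' by Step~2, but Step~2 only computes the \emph{limit} of the moments, not a bound valid for every $n$. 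The cluster expansion you set up gives instead something like
\[
|\chi(F_n(x_{i_1})\cdots F_n(x_{i_m}))|\ \le\ M^{m}\sum_{\pi}\,|I_n|^{|\pi|-m/2}\prod_{B\in\pi}(|B|\,d)^{|B|-1},
\]
with the sum over set partitions into blocks of size $\ge 2$ and $d=|J_0|$ the interaction range. After using $|I_n|^{|\pi|-m/2}\le 1$, the remaining combinatorial sum has exponential generating function $\exp\bigl(\sum_{s\ge 2}s^{s-1}z^{s}/s!\bigr)$, whose radius of convergence is $1/e$; hence the $m$-th coefficient behaves like $C^{m}m!$, not $C^{m}m^{m/2}$. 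Feeding this into $\sum_{m_1+\cdots+m_k=m}\frac{1}{m_1!\cdots m_k!}(\,\cdot\,)$ produces a geometric series $\sum_m(C')^{m}$ that diverges, so dominated convergence fails. In the commutative i.i.d.\ case one rescues the bound $C^{m}m^{m/2}$ via Rosenthal/Marcinkiewicz--Zygmund, but there is no elementary substitute here for a non-commuting family with finite-range dependence; the ``main obstacle'' you flag (signs and boundary terms) is not the obstacle at all.

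This is exactly why the paper, following Goderis--Verbeure--Vets and Matsui, never expands the exponentials. For $k=1$ it uses the block decomposition $I_n=\bigsqcup_l J_l^{(n)}\sqcup\bigsqcup_l L_l^{(n)}$ (large blocks separated by growing gaps), so that $\chi(e^{\mathrm{i}s_n^{J}(x)})$ factorizes exactly into $m_n$ identical factors and one computes $\bigl(1-\frac{p_n}{2|I_n|}s_\chi(x,x)+o(\cdot)\bigr)^{m_n}\to e^{-s_\chi(x,x)/2}$; the $s_n^{L}$ part is disposed of in $L^{2}$ and via commutator estimates. For $k\ge 2$ the paper proves a norm Baker--Campbell--Hausdorff estimate $\|e^{\mathrm{i}F_n(x)}e^{\mathrm{i}F_n(y)}-e^{\mathrm{i}F_n(x+y)}e^{-\frac{1}{2}[F_n(x),F_n(y)]}\|\to 0$ and an ergodic lemma showing that $[F_n(x),F_n(y)]=\frac{1}{|I_n|}\sum_{j\in I_n}\gamma_j(c)$ may be replaced by the scalar $\chi(c)=2\mathrm{i}\sigma_\chi(x,y)$; this reduces $k$ to $k-1$ by induction. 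That route works entirely with unitaries and bounded commutators, so no uniform moment control is ever needed. If you want to salvage the moment approach, you would have to supply an independent proof of the $C^{m}m^{m/2}$ bound (e.g.\ a non-commutative Khintchine/Rosenthal inequality adapted to the block structure), which is substantially more work than the BCH reduction.
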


\begin{Remark}
 This theorem states that the local fluctuations $F_n(x_1),\dots, F_n(x_k)$ converge to Gaussian family in the following sense: Let $(\pi_\chi, \mathfrak{h}_\chi, \xi_\chi)$ be the GNS-triple associated with $\chi$. For any $x\in\mathfrak{M}_{0, {\rm sa}}$, the function $t\in\mathbb{R}\mapsto \varphi_\chi(w(tx))={\rm e}^{-t^2s_\chi(x, x)/2}$ is analytic. Thus, there exists a self-adjoint operator $b(x)$ on $\mathfrak{h}_\chi$ satisfying that $\pi_\chi(w(tx))={\rm e}^{\mathrm{i}tb(x)}$ for any $t\in\mathbb{R}$. Moreover, $w(x)\xi_\chi$ is in the domain of $b(x_1)\cdots b(x_k)$ for any $x, x_1,\dots, x_k\in\mathfrak{M}_{0, {\rm sa}}$. We use the same symbol~$\varphi_\chi$ to denote the vector state given by $\xi_\chi$, that is, we define
 \[\varphi_\chi(b(x_1)\cdots b(x_k)):=\langle b(x_1)\cdots b(x_k)\xi_\chi, \xi_\chi\rangle.\]
 By Theorem \ref{theorem:gaussian}, we have $\lim_{n\to\infty}\chi\big({\rm e}^{\mathrm{i}F_n(x_1)}\cdots {\rm e}^{\mathrm{i}F_n(x_k)}\big)=\varphi_\chi\big({\rm e}^{\mathrm{i}b(x_1)}\cdots {\rm e}^{\mathrm{i}b(x_k)}\big)$, i.e., the characteristic function of $F_n(x_1),\dots, F_n(x_k)$ converges to the characteristic function of $b(x_1), \dots, b(x_k)$. On the other hand, the following Wick formula holds true (see \cite[Proposition 3.8]{Petz:book}):
 \[\varphi_\chi(b(x_1)\cdots b(x_k))=\begin{cases}\displaystyle \sum_\pi\prod_{m=1}^{k/2}s_\chi(x_{i_m}, x_{j_m})-\mathrm{i}\sigma_\chi(x_{i_m}, x_{j_m}),& k\ \text{is even},\\ 0,&k \ \text{is odd},\end{cases}\]
 where the summation runs over all pair partitions $\pi=\{\{i_1>j_1\},\dots, \{i_{k/2}>j_{k/2}\}\}$ of $\{1, 2, \dots, k\}$. In particular, for any $x, y\in \mathfrak{M}_{0, {\rm sa}}$ the covariance of $b(x)$ and $b(y)$ is given as
 \[\varphi_\chi(b(x)b(y))=s_\chi(x, y)-\mathrm{i}\sigma_\chi(x, y)=\sum_{k\in\mathbb{Z}}(\chi(x\gamma_k(y))-\chi(x)\chi(y)).\]
 See \cite{AB, D, GVV89,GVV90, GVV91, GV89, Matsui} for this type of non-commutative central limit theorems.
\end{Remark}

We now apply Theorem \ref{theorem:gaussian} to multiplicative characters of ${\rm U}(\infty)$. We recall that $\mathfrak{M}(U)$ is the $C^*$-inductive limit of the group $W^*$-algebras $W^*({\rm U}(I))$. Let $\mathfrak{M}(U)_0:=\bigcup_{I\in\mathcal{I}}W^*({\rm U}(I))$ and~$\mathfrak{M}(U)_{0, {\rm sa}}$ the space of all self-adjoint elements in $\mathfrak{M}(U)_0$. For any $\omega\in \Omega$, we denote by $\chi_\omega$ the locally normal tracial state on $\mathfrak{M}(U)$ satisfying equation \eqref{eq:char_omega} with $q=1$. By Proposition~\ref{prop:main3}, $\chi_\omega$ is multiplicative on $\mathfrak{M}(U)$.

By equation \eqref{eq:char_omega} (see also \cite[Lemma 7.1]{Sato4}), only if $q=1$, then $\chi_\omega$ is $\gamma$-invariant, and hence the corresponding symmetric bilinear form $s_{\chi_\omega}$ and the symplectic form $\sigma_{\chi_\omega}$ are well defined. Let $\mathfrak{W}_{\chi_\omega}$ denote the Weyl CCR algebra associated with $(\mathfrak{M}(U)_{0, {\rm sa}}, \sigma_{\chi_\omega})$ and $\varphi_{\chi_\omega}$ the quasi-free state on $\mathfrak{W}_{\chi_\omega}$ associated with $s_{\chi_\omega}$.

\begin{Corollary}\label{cor:cha}
 For any $k\in\mathbb{N}$ and $x_1, \dots, x_k\in\mathfrak{M}(U)_{0, {\rm sa}}$, we have
 \[\lim_{n\to\infty}\chi_\omega\big({\rm e}^{\mathrm{i}F_n(x_1)}\cdots {\rm e}^{\mathrm{i}F_n(x_k)}\big)=\varphi_{\chi_\omega}(w(x_1)\cdots w(x_k)).\]
\end{Corollary}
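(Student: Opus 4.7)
The plan is to deduce the corollary as a direct instance of Theorem \ref{theorem:gaussian}, which requires only that I verify the hypotheses of that theorem in the classical ($q=1$) case: namely, that $(\mathfrak{M}(U),(W^*({\rm U}(I)))_{I\in\mathcal{I}})$ is a quasi-local algebra with a flow $\alpha$ and shift translations, and that $\chi_\omega$ is a $\gamma$-invariant multiplicative state.

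First, specializing the constructions of Section \ref{sect:3} to $q=1$ gives all the structural ingredients at once. Proposition \ref{prop:qlalg} yields the quasi-local algebra structure, and the lemma following it constructs the shift isomorphisms $\gamma_{I,J}$ satisfying $(\gamma 1)$--$(\gamma 3)$; specializing $\mathcal{I}$ to finite intervals of $\mathbb{Z}$ puts us in the setting of Section \ref{sect:4}. The dual scaling automorphism group $\hat\tau^{{\rm U}(I)}$ is trivial when $q=1$ because $\rho_{{\rm U}_\lambda}=\pi_\lambda\bigl(K_{i_1}^{-|I|+1}\cdots K_{i_{|I|}}^{|I|-1}\bigr)$ reduces to the identity, so the flow $\alpha=\hat\tau$ on $\mathfrak{M}(U)$ is trivial and every locally normal $(\hat\tau,-1)$-KMS state is automatically a tracial state.

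Second, I need to verify that $\chi_\omega$ is shift invariant and multiplicative. Multiplicativity is exactly the content of Proposition \ref{prop:main3} at $q=1$, using $\Omega_1=\Omega$. For shift invariance, I read off equation \eqref{eq:char_omega} with $q=1$: the left-hand side collapses to $\prod_{i\in I}\Phi_\omega(z_i)$ (using $\Phi_\omega(1)=1$), which is symmetric in $(z_i)_{i\in I}$ and depends on $I$ only through $|I|$. Comparing coefficients against the Schur basis on both sides shows that $\chi_\omega(z_\lambda)/d(\lambda)$ depends only on the underlying non-increasing integer sequence and $|I|$. For any $I,J\in\mathcal{I}$ with $|I|=|J|$, the isomorphism $\gamma_{I,J}$ sends the minimal central projection labelled by $\lambda\in\mathbb{S}_J$ to the one labelled by the same sequence in $\mathbb{S}_I$, and hence $\chi_\omega\circ\gamma_{I,J}=\chi_\omega|_{W^*({\rm U}(J))}$.

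Third, with these two properties in hand, the symmetric form $s_{\chi_\omega}$ and symplectic form $\sigma_{\chi_\omega}$ built in Section \ref{sect:4} are well defined: for $x\in W^*({\rm U}(I_x))_{\rm sa}$ and $y\in W^*({\rm U}(I_y))_{\rm sa}$, once $|k|$ is large enough that $I_x\cap(I_y+k)=\varnothing$, multiplicativity forces $\chi_\omega(x\gamma_k(y))-\chi_\omega(x)\chi_\omega(y)=0$, giving absolute summability over $k\in\mathbb{Z}$ trivially (the sum is finite). Theorem \ref{theorem:gaussian} then applies verbatim to produce the claimed convergence. There is no real obstacle here: the corollary is essentially a bookkeeping exercise verifying that the classical multiplicative characters of ${\rm U}(\infty)$, re-encoded as tracial states on $\mathfrak{M}(U)$ via \eqref{eq:char_omega}, meet the standing hypotheses under which the fluctuation theorem is proved.
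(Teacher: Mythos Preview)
Your proposal is correct and follows exactly the approach the paper intends: the corollary is stated without proof in the paper, with the surrounding text simply noting that at $q=1$ the state $\chi_\omega$ is $\gamma$-invariant (by equation~\eqref{eq:char_omega}) and multiplicative (by Proposition~\ref{prop:main3}), so Theorem~\ref{theorem:gaussian} applies directly. Your write-up supplies more detail on the shift-invariance verification than the paper does, but the strategy is identical.
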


By this corollary, the extreme (i.e., multiplicative) characters of ${\rm U}(\infty)$ produce Gaussian fluctuation limits on the group $W^*$-algebras $W^*({\rm U}(I))$. Moreover, since $\chi_\omega$ is tracial, we have~$s_{\chi_\omega}\equiv0$, and hence the Weyl CCR algebra $\mathfrak{W}_{\chi_\omega}$ is commutative. Namely, we obtain a commutative Gaussian family as limits of $F_n(x_1),\dots, F_n(x_k)$ for any $x_1, \dots, x_k\in\mathfrak{M}(U)_{0, {\rm sa}}$. However, to the best of the author's knowledge, a probabilistic description of them is unclear.

\appendix

\section{Proof of Theorem \ref{theorem:gaussian}}\label{app:proof}
Here we show Theorem \ref{theorem:gaussian}. The strategy is essentially the same as \cite{GVV90, GV89}, and hence the proof might be clear for the experts. However, we give this appendix for the reader's convenience. We use the same notations in the previous section.

For any $n=1, 2,\dots$, we set
\[p_n:=\big\lfloor |I_n|^{1/4}\log |I_n|\big\rfloor, \qquad q_n:=\left\lfloor\frac{|I_n|^{1/2}}{p_n}\right\rfloor, \qquad m_n=\left\lfloor\frac{|I_n|}{p_n+q_n}\right\rfloor\]
and take a partition $I_n=J^{(n)}_1\sqcup L^{(n)}_1\sqcup J^{(n)}_2\sqcup\cdots\sqcup L^{(n)}_{m_n}\sqcup L^{(n)}_{m_n+1}$ such that $J^{(n)}_l$, $L^{(n)}_l$ are intervals with $\big|J^{(n)}_l\big|=p_n$ and $|L^{(n)}_l|=q_n$ for every $l=1,\dots, m_n$ and $J^{(n)}_1, L^{(n)}_1,\dots, L^{(n)}_{m_n}, L^{(n)}_{m_n+1}$ lie in order from left to right (see Figure \ref{fig:partition}). We remark that $\big|L^{(n)}_{m_n+1}\big|<p_n+q_n$. For $x\in\mathfrak{M}$, we~define
\[s^J_n(x):=\frac{1}{\sqrt{|I_n|}}\sum_{k\in J_n}(\gamma_k(x)-\chi(x)), \qquad s^L_n(x):=\frac{1}{\sqrt{|I_n|}}\sum_{k\in L_n}(\gamma_k(x)-\chi(x)),\]
where $J_n:=\bigsqcup_{l=1}^{m_n}J^{(n)}_l$ and $L_n:=\bigsqcup_{l=1}^{m_n+1}L^{(n)}_l$. We remark that $F_n(x)=s^J_n(x)+s^L_n(x)$.

 \begin{figure}[htbp]\centering
 \begin{tikzpicture}
 \draw (0bp, 0bp)--node[above]{$I_n$}(300bp, 0bp);
 \draw(25bp, 0bp)node[below]{$J^{(n)}_1$};
 \draw(65bp, 0bp) node[below]{$L^{(n)}_1$};
 \draw(105bp, 0bp) node[below]{$J^{(n)}_2$};
 \draw(145bp, 0bp) node[below]{$L^{(n)}_2$};
 \draw(265bp, 0bp) node[below]{$L^{(n)}_{m_n}$};
 \draw(290bp, 0bp) node[below]{$L^{(n)}_{m_n+1}$};
 \draw (0bp, -3bp)--(0bp, 3bp);
 \draw (50bp, -3bp)--(50bp, 3bp);
 \draw (80bp, -3bp)--(80bp, 3bp);
 \draw (130bp, -3bp)--(130bp, 3bp);
 \draw (160bp, -3bp)--(160bp, 3bp);
 \draw (250bp, -3bp)--(250bp, 3bp);
 \draw (280bp, -3bp)--(280bp, 3bp);
 \draw (300bp, -3bp)--(300bp, 3bp);
 \end{tikzpicture}
 \caption{Partition of $I_n$.}\label{fig:partition}
 \end{figure}

\begin{Lemma}\label{lem:JL}
 Let $x, y, z\in\mathfrak{M}_0$. The following hold true:
 \begin{itemize}\itemsep=0pt
 \item[$(1)$] $ \lim_{n\to\infty}\big\|\big[s^J_n(x), s^L_n(y)\big]\big\|\!=\!0$, $\lim_{n\to\infty}\big\|\big[s^J_n(x), s^J_n(y)\big]\big\|\!<\!\infty$, $\lim_{n\to\infty}\big\|\big[s^L_n(x), s^L_n(y)\big]\big\|\!=\!0$.
 \item[$(2)$] $\lim_{n\to\infty}\big\|\big[s_n^L(x), [s^J_n(y), s^J_n(z)]\big]\big\|\!=\!0$.
 \end{itemize}
\end{Lemma}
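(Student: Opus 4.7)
The plan is to exploit the locality axiom (ql4) of the quasi-local algebra together with a careful counting of non-commuting pairs (and triples) arising from the block partition of $I_n$. Since $x, y, z \in \mathfrak{M}_0$ are local, each lies in $M_{I_0}$ for some common fixed interval $I_0$; set $r := |I_0|$ so that $\gamma_k(x), \gamma_l(y)$ are supported on intervals of length $r$ near positions $k$ and $l$, and by (ql4) their commutator vanishes whenever $|k - l| \geq r$. Every individual commutator is bounded by $2\|x\|\|y\|$ in norm.

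For part (1), expanding gives
\[[s^J_n(x), s^L_n(y)] = \frac{1}{|I_n|} \sum_{k \in J_n}\sum_{l \in L_n}[\gamma_k(x), \gamma_l(y)].\]
A non-commuting pair requires $|k - l| < r$ with $k \in J_n$ and $l \in L_n$, which can only happen within distance $r$ of one of the $O(m_n)$ boundaries between adjacent $J$- and $L$-blocks; each such boundary contributes $O(r^2)$ admissible pairs. Hence $\|[s^J_n(x), s^L_n(y)]\| = O(m_n/|I_n|)$, and the stated asymptotics $p_n \sim |I_n|^{1/4}\log|I_n|$, $q_n \sim |I_n|^{1/4}/\log|I_n|$, $m_n \sim |I_n|^{3/4}/\log|I_n|$ drive this to zero. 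An entirely analogous argument, now counting non-commuting pairs inside each block, yields $\|[s^J_n(x), s^J_n(y)]\| = O(r|J_n|/|I_n|) = O(1)$ and $\|[s^L_n(x), s^L_n(y)]\| = O(r|L_n|/|I_n|)$; the last expression tends to zero because $|L_n| = m_n q_n \sim |I_n|/(\log|I_n|)^2$.

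For part (2), expand into a triple sum over $(m, k, l) \in L_n \times J_n \times J_n$. When $[\gamma_k(y), \gamma_l(z)]$ is nonzero it is supported in an interval of length less than $2r$ around $k$, and $[\gamma_m(x), \cdot\,]$ with this element is nonzero only when $m$ is within distance $r$ of that support, forcing $|m - k| < 3r$. Combined with $m \in L_n$ and $k \in J_n$, this pins both $k$ and $m$ within $O(r)$ of a common $J$-$L$ boundary, giving $O(r^2 m_n)$ admissible pairs $(m, k)$, then $O(r)$ choices of $l$. Since each double commutator is bounded by $8\|x\|\|y\|\|z\|$, we obtain
\[\bigl\|[s^L_n(x), [s^J_n(y), s^J_n(z)]]\bigr\| = O\!\left(m_n/|I_n|^{3/2}\right) \longrightarrow 0.\]

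The main obstacle is the bookkeeping in (2): one must notice that the support of the inner commutator $[\gamma_k(y), \gamma_l(z)]$ can extend across a $J$-$L$ boundary, so a naive bound would gain a factor of $|J_n| \sim |I_n|$ rather than $m_n \sim |I_n|^{3/4}/\log|I_n|$. The crucial observation is that the outer commutator with $\gamma_m(x)$ for $m \in L_n$ survives only when the support genuinely reaches into an $L$-block, which pins $k$ near a boundary and reduces the admissible region for $k$ from $|J_n|$ down to $O(r m_n)$. Everything else reduces to routine asymptotic analysis with the stated orders of $p_n$, $q_n$, $m_n$.
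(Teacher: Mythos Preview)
Your argument is correct, and it reaches the same conclusions by a somewhat different route than the paper.  The paper never counts boundary pairs explicitly: for (1) it simply bounds
\[
\big\|[s^J_n(x),s^L_n(y)]\big\|\;\le\;\frac{1}{|I_n|}\sum_{l\in L_n}\sum_{k\in\mathbb{Z}}\|[\gamma_k(x),y]\|\;\le\;\frac{|L_n|}{|I_n|}\sum_{k\in\mathbb{Z}}\|[\gamma_k(x),y]\|,
\]
using only that the inner sum is a fixed finite constant and that $|L_n|/|I_n|\to 0$.  For (2) it first invokes the Jacobi identity to place $s^L_n(x)$ in the innermost position, and then repeats the same device, obtaining a bound proportional to $|L_n|/|I_n|^{3/2}$.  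Your direct geometric count of the surviving pairs (resp.\ triples) near the $J$--$L$ interfaces yields the slightly sharper rates $O(m_n/|I_n|)$ and $O(m_n/|I_n|^{3/2})$, and bypasses the Jacobi identity altogether; the price is a bit more bookkeeping about supports and block boundaries.  One small slip: you write $|L_n|=m_nq_n$, but $L_n$ also contains the remainder block $L^{(n)}_{m_n+1}$ of size $<p_n+q_n$; this is lower order and does not affect your asymptotics.
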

\begin{proof}
 For the claim (1), we show only the first case, and the other statements can be proved by similar ways. By definition, $p_n/|I_n|$, $q_n/|I_n|$, and $q_nm_n/|I_n|$ converge to 0 as $n\to\infty$. Thus, we have
 \begin{align*}
 \big\|\big[s^J_n(x), s^L_n(y)\big]\big\|
 & \leq \frac{1}{\sqrt{|I_n|}}\sum_{k\in J_n}\big\|\big[\gamma_k(x), s^L_n(y)\big]\big\|
 \leq \frac{1}{|I_n|}\sum_{l\in L_n}\sum_{k\in\mathbb{Z}}\|[\gamma_k(x), y]\| \\
 & \leq \frac{q_nm_n+p_n+q_n}{|I_n|}\sum_{k\in\mathbb{Z}}\|[\gamma_k(x), y]\| \to0\qquad \text{as }n\to\infty,
 \end{align*}
 where we remark that $\sum_{k\in\mathbb{Z}}\|[\gamma_k(x), y]\|<\infty$ since $x, y\in\mathfrak{M}_0$.

 We show the claim (2). By the Jacobi identity, it suffices to show that $\big\| \big[s^J_n(y), \!\big[s^J_n(z), s^L_n(x)\big]\big]\big\|$ converges to 0 as $n\to\infty$, and we have
 \begin{align*}
 \big\| \big[s^J_n(y), \big[s^J_n(z), s^L_n(x)\big]\big]\big\|
 & \leq \frac{1}{|I_n|}\sum_{k_1, k_2\in J_n}\big\|\big[\gamma_{k_1}(y), \big[\gamma_{k_2}(z), s^L_n(x)\big]\big]\big\| \\
 & \leq \frac{q_nm_n+p_n+q_n}{|I_n|^{3/2}}\sum_{k_1, k_2\in \mathbb{Z}}\|[\gamma_{k_1}(y), [\gamma_{k_2}(z), x]]\| \\
 & \to0 \qquad \text{as }n\to\infty,
 \end{align*}
 where we remark that $\sum_{k_1, k_2\in\mathbb{Z}}\|[\gamma_{k_1}(x), [\gamma_{k_2}(y), z]]<\infty$ since $x, y, z\in \mathfrak{M}_0$.
\end{proof}

We obtain the following Gaussian fluctuation limit of single operator.
\begin{Proposition}\label{prop:k=1}
 For any $x\in\mathfrak{M}_{0, {\rm sa}}$, we have $\lim_{n\to\infty}\chi\big({\rm e}^{\mathrm{i}F_n(x)}\big)=\varphi_\chi(w(x))$.
\end{Proposition}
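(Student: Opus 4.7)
The plan is to adapt the Bernstein-block strategy for non-commutative central limit theorems (cf.~\cite{GV89,GVV90}) to our quasi-local setting, using the large--small partition of $I_n$ already introduced. Writing
\[X^{(n)}_l:=\frac{1}{\sqrt{|I_n|}}\sum_{k\in J^{(n)}_l}(\gamma_k(x)-\chi(x)1),\qquad 1\le l\le m_n,\]
one has $s^J_n(x)=\sum_l X^{(n)}_l$ and $F_n(x)=s^J_n(x)+s^L_n(x)$. The argument proceeds in four steps: (i)~negligibility of $s^L_n(x)$, (ii)~exact factorization of the big-block exponential, (iii)~multiplicativity of $\chi$, and (iv)~a Gaussian Taylor estimate.

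For (i), I would use Duhamel's formula
\[e^{\mathrm{i}F_n(x)}-e^{\mathrm{i}s^J_n(x)}=\mathrm{i}\int_0^1 e^{\mathrm{i}tF_n(x)}\,s^L_n(x)\,e^{\mathrm{i}(1-t)s^J_n(x)}\,dt,\]
combined with the identity $s^L_n(x)e^{\mathrm{i}(1-t)s^J_n(x)}=e^{\mathrm{i}(1-t)s^J_n(x)}s^L_n(x)+[s^L_n(x),e^{\mathrm{i}(1-t)s^J_n(x)}]$. Passing to the GNS triple $(\pi_\chi,\mathfrak{h}_\chi,\xi_\chi)$, the first piece is bounded in modulus by $\chi((s^L_n(x))^2)^{1/2}=O(\sqrt{|L_n|/|I_n|})=O(\sqrt{q_n/p_n})\to 0$ (since the correlation function $k\mapsto \chi(x\gamma_k(x))-\chi(x)^2$ is finitely supported by multiplicativity), and the second by the standard Duhamel bound $\|[s^L_n(x),e^{\mathrm{i}(1-t)s^J_n(x)}]\|\le\|[s^L_n(x),s^J_n(x)]\|\to 0$ from Lemma~\ref{lem:JL}(1). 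For (ii), pick $I_x\in\mathcal{I}$ with $x\in M_{I_x}$: once $q_n\ge|I_x|$, the intervals $I_x+J^{(n)}_l$ are pairwise disjoint, so (ql4) forces $[X^{(n)}_l,X^{(n)}_{l'}]=0$ for $l\ne l'$, yielding the \emph{exact} identity $e^{\mathrm{i}s^J_n(x)}=\prod_l e^{\mathrm{i}X^{(n)}_l}$.

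For (iii), each $e^{\mathrm{i}X^{(n)}_l}$ lies in $M_{I_x+J^{(n)}_l}$ by functional calculus; arranging the blocks left-to-right and applying multiplicativity inductively---the partial product $\prod_{l'<l}e^{\mathrm{i}X^{(n)}_{l'}}$ lies in $M_{K_l}$ for the interval $K_l:=[\min(I_x+J^{(n)}_1),\max(I_x+J^{(n)}_{l-1})]\in\mathcal{I}$, which is disjoint from $I_x+J^{(n)}_l$---gives $\chi(\prod_l e^{\mathrm{i}X^{(n)}_l})=\prod_l\chi(e^{\mathrm{i}X^{(n)}_l})$. For (iv), $\chi(X^{(n)}_l)=0$ by $\gamma$-invariance, so $\chi(e^{\mathrm{i}X^{(n)}_l})=1+a^{(n)}_l$ where $a^{(n)}_l=-\tfrac12\chi((X^{(n)}_l)^2)+r^{(n)}_l$ with $|r^{(n)}_l|\le\tfrac16\chi(|X^{(n)}_l|^3)\le\tfrac16\|X^{(n)}_l\|\chi((X^{(n)}_l)^2)$ (using the operator inequality $|A|^3\le\|A\|A^2$ and $|\chi(R)|\le\chi(|R|)$ via polar decomposition and Cauchy--Schwarz). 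Because $\|X^{(n)}_l\|\le 2p_n\|x\|/\sqrt{|I_n|}\to 0$ and
\[\sum_l\chi((X^{(n)}_l)^2)=\frac{m_n}{|I_n|}\sum_{|k|<p_n}(p_n-|k|)(\chi(x\gamma_k(x))-\chi(x)^2)\longrightarrow s_\chi(x,x)\]
by dominated convergence ($m_n p_n/|I_n|\to 1$ and the inner sum has finite support), one has $\max_l|a^{(n)}_l|\to 0$ and $\sum_l|a^{(n)}_l|=O(1)$, so $\log\prod_l(1+a^{(n)}_l)=\sum_l a^{(n)}_l+O(\max_l|a^{(n)}_l|\sum_l|a^{(n)}_l|)\to-\tfrac12 s_\chi(x,x)=\log\varphi_\chi(w(x))$.

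The main obstacle is Step~(i): the naive operator-norm Duhamel bound is useless since $\|s^L_n(x)\|=O(1)$. One must combine two different norms---the $L^2(\chi)$-smallness of $s^L_n(x)$, which is where the rate $q_n/p_n\to 0$ of the Bernstein partition enters, and the operator-norm vanishing of the commutator $[s^L_n(x),s^J_n(x)]$---via the GNS representation. The remaining steps are technically lighter: the quasi-local axiom provides an \emph{exact} factorization for large $n$ rather than an approximate one, and the cubic remainder in (iv) is controlled by the mixed bound $\chi(|X^{(n)}_l|^3)\le\|X^{(n)}_l\|\chi((X^{(n)}_l)^2)$ rather than the diverging cube-of-the-norm estimate $\|X^{(n)}_l\|^3$.
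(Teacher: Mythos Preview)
Your proposal is correct and follows essentially the same Bernstein-block strategy as the paper: reduce $F_n(x)$ to $s^J_n(x)$ via Duhamel combined with the $L^2(\chi)$-smallness of $s^L_n(x)$ and the commutator bound of Lemma~\ref{lem:JL}, then factorize exactly over the big blocks by (ql4), apply multiplicativity and $\gamma$-invariance of $\chi$, and conclude with a second-order Taylor estimate. The only cosmetic differences are that the paper uses $\gamma$-invariance to write the product as a single factor to the $m_n$-th power (rather than summing logarithms), and bounds the cubic remainder via Cauchy--Schwarz between $\chi(F^2)$ and $\chi(F^4)$ instead of your operator inequality $|A|^3\le\|A\|A^2$.
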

\begin{proof}
 We may assume that $\chi(x)=0$. Using the formulas ${\rm e}^{\mathrm{i}(a+b)}-{\rm e}^{\mathrm{i}a}=\int_0^1{\rm e}^{\mathrm{i}(1-t)a}(\mathrm{i}b){\rm e}^{\mathrm{i}t(a+b)}{\rm d}t$ and $\|[{\rm e}^{\mathrm{i}a}, b]\|\leq \|[a, b]\|$ for bounded self-adjoint operators $a$, $b$ (see \cite[Proof of Theorem 4.1]{GVV89}), we have
 \begin{align*}
 \big|\chi\big({\rm e}^{\mathrm{i}F_n(x)}\big)-\chi\big({\rm e}^{\mathrm{i}s^J_n(x)}\big)\big|
 & \leq\int_0^1\big|\chi\big({\rm e}^{\mathrm{i}(1-t)s^J_n(x)}s^L_n(x){\rm e}^{\mathrm{i}tF_n(x)}\big)\big|{\rm d}t \\
 & \leq \int_0^1\big|\chi\big(\big[{\rm e}^{\mathrm{i}(1-t)s^J_n(x)}, s^L_n(x)\big]{\rm e}^{\mathrm{i}tF_n(x)}\big)\big|{\rm d}t\\
 &\quad+\int_0^1\big|\chi\big(s^L_n(x){\rm e}^{\mathrm{i}(1-t)s^J_n(x)}{\rm e}^{\mathrm{i}tF_n(x)}\big)\big|{\rm d}t \\
 & \leq\frac{1}{2}\big\|\big[s^J_n(x), s^L_n(x)\big]\big\|+\big|\chi\big(s^L_n(x)^2\big)\big|^{1/2},
 \end{align*}
 where we remark that $\big|\chi\big(s^L_n(x){\rm e}^{\mathrm{i}(1-t)s^J_n(x)}{\rm e}^{\mathrm{i}tF_n(x)}\big)\big|\leq \big|\chi\big(s^L_n(x)^2\big)\big|^{1/2}$ by the Cauchy--Schwarz inequality. By Lemma \ref{lem:JL}, $\big\|\big[s^J_n(x), s^L_n(x)\big]\big\|$ converges to $0$ as $n\to\infty$. We also have
 \begin{align*}
 \big|\chi\big(s^L_n(x)^2\big)\big|
 & \leq\frac{1}{|I_n|}\sum_{k_1, k_2\in L_n}|\chi(x\gamma_{k_2-k_1}(x))| \leq\frac{m_nq_n+p_n+q_n}{|I_n|}\sum_{k\in\mathbb{Z}}|\chi(x\gamma_k(x))| \to0
 \end{align*}
 as $n\to\infty$, where $\sum_{k\in\mathbb{Z}}|\chi(x\gamma_k(x))|<\infty$ since $\chi(x)=0$ and $\chi$ is multiplicative. Thus, we have
 \[\lim_{n\to\infty}\big|\chi\big({\rm e}^{\mathrm{i}F_n(x)}\big)-\chi\big({\rm e}^{\mathrm{i}s^J_n(x)}\big)\big|=0.\]
 Therefore, it suffices to show that $\lim_{n\to\infty}\chi({\rm e}^{\mathrm{i}s^J_n(x)})={\rm e}^{-s_\chi(x, x)/2}$. Since $x\in\mathfrak{M}_0$, for sufficiently large $n$, the intervals $L^{(n)}_l$ are large enough and $\{F_{J^{(n)}_l}(x)\}_{l=1}^{m_n}$ commute mutually. Since $\chi$ is $\gamma$-invariant and multiplicative, we have
 \[\chi\big({\rm e}^{\mathrm{i}s^J_n(x)}\big)=\chi\Big({\rm e}^{\mathrm{i}\sqrt{\frac{p_n}{|I_n|}}F_{J^{(n)}_1}(x)}\cdots {\rm e}^{\mathrm{i}\sqrt{\frac{p_n}{|I_n|}}F_{J^{(n)}_{m_n}}(x)}\Big)=\chi\Big({\rm e}^{\mathrm{i}\sqrt{\frac{p_n}{|I_n|}}F_{J^{(n)}_1}(x)}\Big)^{m_n}.\]
 By the Taylor expansion theorem, there exists $t\in[0, 1]$ such that
 \begin{align*}
 \chi\Big({\rm e}^{\mathrm{i}\sqrt{\frac{p_n}{|I_n|}}F_{J^{(n)}_1}(x)}\Big)
 & =1-\frac{p_n}{2|I_n|}\chi\big(F_{J^{(n)}_1}(x)^2\big)+\frac{1}{3!}\sqrt{\frac{p_n}{|I_n|}}^3\chi\Big(F_{J^{(n)}_1}(x)^3{\rm e}^{\mathrm{i}t\sqrt{\frac{p_n}{|I_n|}}F_{J^{(n)}_1}(x)}\Big) \\
 & =1-\frac{p_n}{2|I_n|}\big(\chi\big(F_{J^{(n)}_1}(x)^2\big)+O(\sqrt{p_n^3/|I_n|})\big),
 \end{align*}
 where we remark that
 \begin{align*}
 \Big|\chi\Big(F_{J^{(n)}_1}(x)^3{\rm e}^{\mathrm{i}t\sqrt{\frac{p_n}{|I_n|}}F_{J^{(n)}_1}(x)}\Big)\Big|
 & \leq \sqrt{\chi\big(F_{J^{(n)}_1}(x)^2\big)}\sqrt{\chi\big(F_{J^{(n)}_1}(x)^4\big)} \leq p_n\|x\|^2\sqrt{\chi\big(F_{J^{(n)}_1}(x)^2\big)},
 \end{align*}
 and $\lim_{n\to \infty}\chi\big(F_{J^{(n)}_1}(x)^2\big)=s_\chi(x, x)$. Therefore, we have
 \[\chi\Big({\rm e}^{\mathrm{i}\sqrt{\frac{p_n}{|I_n|}}F_{J^{(n)}_1}(x)}\Big)^{m_n}=\Big(1-\frac{p_n}{2|I_n|}\Big(\chi\big(F_{J^{(n)}_1}(x)^2\big)+O\big(\sqrt{p_n^3/|I_n|}\big)\Big)\Big)^{m_n}\to {\rm e}^{-s_\chi(x, x)/2}\]
 as $n\to\infty$.
\end{proof}

For any $x, y\in\mathfrak{M}$, we define $L(x, y):={\rm e}^{\mathrm{i}x}{\rm e}^{\mathrm{i}y}-{\rm e}^{\mathrm{i}(x+y)}{\rm e}^{-\frac{1}{2}[x, y]}$.
\begin{Lemma}\label{lem:L}
 For any $x, y\in\mathfrak{M}_{0,{\rm sa}}$, we have $\lim_{n\to\infty}\|L(F_n(x), F_n(y))\|=0$.
\end{Lemma}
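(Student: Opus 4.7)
The plan is to derive a universal norm bound of the form
\[
\|L(A,B)\|\le \tfrac{1}{2}\bigl(\|[A,[A,B]]\|+\|[B,[A,B]]\|\bigr),
\]
valid for every pair of self-adjoint elements $A,B$ in a unital $C^*$-algebra, and then to apply it with $A=F_n(x)$, $B=F_n(y)$, reducing the lemma to showing that both triple commutators on the right-hand side vanish in norm as $n\to\infty$. The crucial feature is that the bound depends only on the double commutators and not on $\|A\|$ or $\|B\|$ themselves, which is essential because $\|F_n(x)\|$ does not tend to zero. This also means that, somewhat in contrast with the rest of the appendix, the partition of $I_n$ into the blocks $J^{(n)}_l$ and $L^{(n)}_l$ is not needed for this particular lemma.

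For the universal bound, put $V(t):=e^{itA}e^{itB}$ and $U(t):=e^{it(A+B)}e^{-t^2[A,B]/2}$ for $t\in[0,1]$, so that $L(A,B)=V(1)-U(1)$. Since $[A,B]^*=-[A,B]$, the time-dependent generator $\mathcal{L}(t):=i(A+B)-t[A,B]$ is anti-self-adjoint, so the propagator $T(t,s)$ it defines inside the $C^*$-algebra is unitary. Using the identity $e^{isA}Be^{-isA}-B=i\int_0^s e^{iuA}[A,B]e^{-iuA}\,du$ and the analogous one for $[e^{it(A+B)},[A,B]]$, I would check that
\[
V'(t)=\mathcal{L}(t)V(t)+E_V(t),\qquad U'(t)=\mathcal{L}(t)U(t)+E_U(t),
\]
with $\|E_V(t)\|\le \tfrac{t^2}{2}\|[A,[A,B]]\|$ and $\|E_U(t)\|\le t^2(\|[A,[A,B]]\|+\|[B,[A,B]]\|)$. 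Duhamel's formula applied to $\delta(t):=V(t)-U(t)$, together with $\|T(1,s)\|=1$, then gives $\|L(A,B)\|\le \int_0^1(\|E_V(s)\|+\|E_U(s)\|)\,ds$, which is at most the claimed constant times the sum of the two double-commutator norms.

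Next, with $A=F_n(x)$ and $B=F_n(y)$, the scalars $\chi(x)1$ and $\chi(y)1$ drop out of every commutator, so
\[
[F_n(x),[F_n(x),F_n(y)]]=|I_n|^{-3/2}\sum_{k_1,k_2,k_3\in I_n}\bigl[\gamma_{k_1}(x),[\gamma_{k_2}(x),\gamma_{k_3}(y)]\bigr].
\]
Translation invariance of the norm and the reindexing $m=k_1-k_3$, $l=k_2-k_3$ yield
\[
\bigl\|[F_n(x),[F_n(x),F_n(y)]]\bigr\|\le |I_n|^{-1/2}\sum_{m,l\in\mathbb{Z}}\|[\gamma_m(x),[\gamma_l(x),y]]\|,
\]
and the double sum on the right is finite because $x,y\in\mathfrak{M}_0$ are local: $[\gamma_l(x),y]$ vanishes for all but finitely many $l$, and for each such $l$ the outer commutator with $\gamma_m(x)$ vanishes for all but finitely many $m$. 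Hence the triple commutator has norm $O(|I_n|^{-1/2})$; the symmetric computation controls $[F_n(y),[F_n(x),F_n(y)]]$, and combined with the universal bound above this gives $\|L(F_n(x),F_n(y))\|=O(|I_n|^{-1/2})\to 0$.

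The hard part will be executing the first step cleanly: verifying the unitarity of the time-dependent propagator $T(t,s)$ produced by $\mathcal{L}(t)$ (standard once anti-self-adjointness is noted) and, more delicately, packaging the commutator identities so that the error terms $E_V,E_U$ are controlled purely by the double commutators $\|[A,[A,B]]\|$ and $\|[B,[A,B]]\|$, with no residual dependence on $\|A\|$ or $\|B\|$. Once the abstract inequality is in place, the remainder is a routine locality estimate that does not even require the partition machinery set up earlier in the appendix.
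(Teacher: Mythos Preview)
Your approach is correct and genuinely different from the paper's. The paper proves the lemma by inserting the block decomposition $F_n=s^J_n+s^L_n$ and bounding $\|L(F_n(x),F_n(y))\|\le A_n+B_n+C_n$, where $A_n$ and $C_n$ are handled via Lemma~\ref{lem:JL} and the standard inequalities $\|e^{\mathrm{i}(a+b)}-e^{\mathrm{i}a}e^{\mathrm{i}b}\|\le\tfrac12\|[a,b]\|$, $\|[e^{\mathrm{i}a},e^{\mathrm{i}b}]\|\le\|[a,b]\|$, while the core term $B_n$ is controlled by exploiting the mutual commutativity of the far-apart blocks $F_{J^{(n)}_l}$ and invoking \cite[Lemmas~5.2 and 5.3]{Petz:book}. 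Your route bypasses all of this: the Duhamel comparison of $V(t)=e^{\mathrm{i}tA}e^{\mathrm{i}tB}$ and $U(t)=e^{\mathrm{i}t(A+B)}e^{-t^2[A,B]/2}$ against the anti-self-adjoint generator $\mathcal{L}(t)=\mathrm{i}(A+B)-t[A,B]$ produces error terms bounded purely by $\|[A,[A,B]]\|$ and $\|[B,[A,B]]\|$ (the exponential $e^{-t^2[A,B]/2}$ is unitary since $[A,B]$ is skew-adjoint, so no stray $\|A\|,\|B\|$ factors appear), and the resulting triple commutators for $A=F_n(x)$, $B=F_n(y)$ are $O(|I_n|^{-1/2})$ by the locality count you sketch. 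What you gain is a self-contained argument that needs neither the $J/L$ partition nor the two lemmas from Petz; what the paper's version buys is consistency with the machinery already set up for Proposition~\ref{prop:k=1}, at the price of a longer and more intricate proof.
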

\begin{proof}
 It suffices to show that the following three terms
 \begin{gather*}
 A_n:=\big\|{\rm e}^{\mathrm{i}F_n(x)}{\rm e}^{\mathrm{i}F_n(y)}-{\rm e}^{\mathrm{i}s^J_n(x)}{\rm e}^{\mathrm{i}s^J_n(y)}{\rm e}^{\mathrm{i}s^L_n(x)}{\rm e}^{\mathrm{i}s^L_n(y)}\big\|,\\
 B_n:=\big\|{\rm e}^{\mathrm{i}s^J_n(x)}{\rm e}^{\mathrm{i}s^J_n(y)}{\rm e}^{\mathrm{i}s^L_n(x)}{\rm e}^{\mathrm{i}s^L_n(y)}-{\rm e}^{\mathrm{i}s^J_n(x+y)}{\rm e}^{-\frac{1}{2}[s^J_n(x), s^J_n(y)]}{\rm e}^{\mathrm{i}s^L_n(x+y)}{\rm e}^{-\frac{1}{2}[s^L_n(x), s^L_n(y)]}\big\|,\\
 C_n:=\big\|{\rm e}^{\mathrm{i}s^J_n(x+y)}{\rm e}^{-\frac{1}{2}[s^J_n(x), s^J_n(y)]}{\rm e}^{\mathrm{i}s^L_n(x+y)}{\rm e}^{-\frac{1}{2}[s^L_n(x), s^L_n(y)]}-{\rm e}^{\mathrm{i}F_n(x+y)}{\rm e}^{-\frac{1}{2}[F_n(x), F_n(y)]}\big\|
 \end{gather*}
 converge to $0$ as $n\to\infty$. To show them, we use the following inequalities (see, e.g., \cite[proof of Lemma 2.2]{Matsui02}, \cite[proof of Theorem 4.1]{GVV89}):
 \begin{gather*}
 \big\|{\rm e}^{\mathrm{i(a+b)}}-{\rm e}^{\mathrm{i}a}{\rm e}^{\mathrm{i}b}\big\|\leq\frac{1}{2}\|[a, b]\|, \qquad
 \big\|\big[{\rm e}^{\mathrm{i}a}, {\rm e}^{\mathrm{i}b}\big]\big\|\leq \big\|\big[a, {\rm e}^{\mathrm{i}b}\big]\big\|\leq \|[a, b]\|
 \end{gather*}
 for any self-adjoint elements $a$, $b$ of a $C^*$-algebra. By Lemma \ref{lem:JL}, we have
 \begin{align*}
 A_n & \leq \big\|{\rm e}^{\mathrm{i}F_n(x)}-{\rm e}^{\mathrm{i}s^J_n(x)}{\rm e}^{\mathrm{i}s^L_n(x)}\big\|+\big\|{\rm e}^{\mathrm{i}F_n(y)}-{\rm e}^{\mathrm{i}s^J_n(y)}{\rm e}^{\mathrm{i}s^L_n(y)}\big\|+\big\|[{\rm e}^{\mathrm{i}s^L_n(x)}, {\rm e}^{\mathrm{i}s^J_n(y)}]\big\| \\
 & \leq \frac{1}{2}\big\|\big[s^J_n(x), s^L_n(x)\big]\big\|+\frac{1}{2}\big\|\big[s^J_n(y), s^L_n(y)\big]\big\|+\big\|\big[s^L_n(x), s^J_n(y)\big]\big\| \to 0\qquad \text{as }n\to\infty,
 \end{align*}
 and
 \begin{align*}
 C_n & \leq \big\|\big[{\rm e}^{-\frac{1}{2}[s^J_n(x), s^J_n(y)]}, {\rm e}^{\mathrm{i}s^L_n(x+y)}\big]\big\|{\rm e}^{\frac{1}{2}\|[s^L_n(x), s^L_n(y)]\|} \\&\quad+\big\|{\rm e}^{-\frac{1}{2}[s^J_n(x), s^J_n(y)]}{\rm e}^{-\frac{1}{2}[s^L_n(x), s^L_n(y)]}-{\rm e}^{-\frac{1}{2}[s^J_n(x), s^J_n(y)]-\frac{1}{2}[s^L_n(x), s^L_n(y)]}\big\|\\
 &\quad +\big\|{\rm e}^{-\frac{1}{2}[s^J_n(x), s^J_n(y)]-\frac{1}{2}[s^L_n(x), s^L_n(y)]}-{\rm e}^{-\frac{1}{2}[F_n(x), F_n(y)]}\big\|\\
 &\quad+ \big\|{\rm e}^{\mathrm{i}s^J_n(x+y)}{\rm e}^{\mathrm{i}s^L_n(x+y)}-{\rm e}^{\mathrm{i}F_n(x+y)}\big\|{\rm e}^{\frac{1}{2}\|[F_n(x), F_n(y)]\|}\\
 & \leq \frac{1}{2}\big\|\big[s^L_n(x+y), \big[s^J_n(x), s^J_n(y)\big]\big]\big\|{\rm e}^{\frac{1}{2}\|[s^L_n(x), s^L_n(y)]\|} \\
 &\quad+\frac{1}{8}\big\|\big[\big[s^J_n(x), s^J_n(y)\big], \big[s^L_n(x), s^L_n(y)\big]\big]\big\|\\
 &\quad +\frac{1}{8}\big\|\big[\big[s^J_n(x), s^J_n(y)\big]+\big[s^L_n(x), s^L_n(y)\big], \big[s^J_n(x), s^L_n(y)\big]+\big[s^L_n(x), s^J_n(y)\big]\big]\big\|\\
 &\quad +\frac{1}{2}\big\|\big[s^J_n(x+y), s^L_n(x+y)\big]\big\|{\rm e}^{\frac{1}{2}\|[F_n(x), F_n(y)]\big\|}
 \to0 \qquad\text{as }n\to\infty,
 \end{align*}
 where we remark that $\|[F_n(x), F_n(y)]\|\leq \sum_{k\in\mathbb{Z}}\|[\gamma_k(x), y]\|<\infty$. To show $\lim_{n\to\infty}B_n=0$, it suffices to show that $\big\|L\big(s^J_n(x), s^J_n(y)\big)\big\|$, $\big\|L\big(s^L_n(x), s^L_n(y)\big)\big\|$ converge to $0$ as $n\to\infty$ since
 \[B_n \leq \big\|L\big(s^J_n(x), s^J_n(y)\big)\big\|+{\rm e}^{\frac{1}{2}\|[s^J_n(x), s^J_n(y)]\|}\big\|L\big(s^L_n(x), s^L_n(y)\big)\big\|.\]
 For large $n$, we have $[F_{J^{(n)}_l}(x), F_{J^{(n)}_m}(x)]=[F_{J^{(n)}_l}(y), F_{J^{(n)}_m}(y)]=[F_{J^{(n)}_l}(x), F_{J^{(n)}_m}(y)]=0$ if $l\neq m$. Thus, applying \cite[Lemma 5.3]{Petz:book} repeatedly, we have
 \[\big\|L\big(s^J_n(x), s^J_n(y)\big)\big\|\leq \left\|L\left(\sqrt{\frac{p_n}{|I_n|}}F_{J^{(n)}_1}(x), \sqrt{\frac{p_n}{|I_n|}}F_{J^{(n)}_1}(y)\right)\right\|\frac{1-{\rm e}^{\frac{m_n}{2}\|\sqrt{\frac{p_n}{|I_n|}}F_{J^{(n)}_1}\|}}{1-{\rm e}^{\frac{1}{2}\|\sqrt{\frac{p_n}{|I_n|}}F_{J^{(n)}_1}\|}}.\]
 Moreover, we have \smash{$\sqrt{p_n/|I_n|}\|F_{J^{(n)}_1}(x)\|\leq \sqrt{p_n^2/|I_n|}\|x\|$} and $p_n^2/|I_n|\to0$ as $n$ goes to infinity. Thus, by \cite[Lemma 5.2]{Petz:book}, we conclude that $\lim_{n\to\infty}\big\|L\big(s^J_n(x), s^J_n(y)\big)\big\|=0$. Similarly, we have $\lim_{n\to\infty}\big\|L\big(s^L_n(x), s^L_n(y)\big)\big\|=0$.
\end{proof}

\begin{Lemma}\label{lem:bdd_seq}
 Let $y\in \mathfrak{M}_0$ and $(x_n)_{n=1}^\infty$ a bounded sequence in $\mathfrak{M}$ such that $\lim_{n\to\infty}\chi(x_n)$ exists. Then, we have
 \[\lim_{n\to\infty}\chi\Big(x_n{\rm e}^{\frac{1}{|I_n|}\sum_{k\in I_n}\gamma_k(y)}\Big)=\lim_{n\to\infty}\chi(x_n){\rm e}^{\chi(y)}.\]
\end{Lemma}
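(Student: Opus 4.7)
The plan is to pass to the GNS triple $(\pi_\chi,\mathfrak{h}_\chi,\xi_\chi)$ of $\chi$ and reduce the statement to the single Hilbert-space convergence
\[\pi_\chi\bigl({\rm e}^{A_n(y)}\bigr)\xi_\chi\;\longrightarrow\;{\rm e}^{\chi(y)}\xi_\chi\qquad\text{in }\mathfrak{h}_\chi,\qquad A_n(y):=\frac{1}{|I_n|}\sum_{k\in I_n}\gamma_k(y).\]
Granting this, the lemma is immediate: since $\chi(z)=\langle\pi_\chi(z)\xi_\chi,\xi_\chi\rangle$, the difference $\chi(x_n{\rm e}^{A_n(y)})-\chi(x_n){\rm e}^{\chi(y)}$ equals $\langle\pi_\chi({\rm e}^{A_n(y)})\xi_\chi-{\rm e}^{\chi(y)}\xi_\chi,\,\pi_\chi(x_n^*)\xi_\chi\rangle$, which by Cauchy--Schwarz is bounded by $\sup_n\|x_n\|\cdot\|\pi_\chi({\rm e}^{A_n(y)})\xi_\chi-{\rm e}^{\chi(y)}\xi_\chi\|\to0$, and combining with the assumed limit of $\chi(x_n)$ gives the claimed identity.

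The heart of the proof is a von Neumann-type mean ergodic estimate at the linear level. Setting $y':=y-\chi(y)\cdot 1\in\mathfrak{M}_0$ (so that $\chi(y')=0$) and using shift-invariance of $\chi$ together with $\gamma_{k_1}(y'^*)\gamma_{k_2}(y')=\gamma_{k_1}\bigl(y'^*\gamma_{k_2-k_1}(y')\bigr)$, I would expand
\[\|\pi_\chi(A_n(y'))\xi_\chi\|^2=\chi\bigl(A_n(y')^*A_n(y')\bigr)=\frac{1}{|I_n|}\sum_{|j|<|I_n|}\Bigl(1-\frac{|j|}{|I_n|}\Bigr)\chi\bigl(y'^*\gamma_j(y')\bigr).\]
Because $y',y'^*$ both live in some $M_I$, the disjointness axiom (ql4) and multiplicativity of $\chi$ force $\chi(y'^*\gamma_j(y'))=\chi(y'^*)\chi(y')=0$ for all sufficiently large $|j|$, so the inner sum is finitely supported and the prefactor $1/|I_n|$ drives the whole expression to zero. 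Equivalently, $\pi_\chi(A_n(y))\xi_\chi\to\chi(y)\xi_\chi$ in $\mathfrak{h}_\chi$.

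To bootstrap from $m=1$ to all $m\geq 0$, I use the uniform bound $\|\pi_\chi(A_n(y))\|\leq\|A_n(y)\|\leq\|y\|$ and the telescoping estimate
\[\bigl\|\pi_\chi(A_n(y)^{m+1})\xi_\chi-\chi(y)^{m+1}\xi_\chi\bigr\|\leq\|y\|\cdot\bigl\|\pi_\chi(A_n(y)^m)\xi_\chi-\chi(y)^m\xi_\chi\bigr\|+|\chi(y)|^m\bigl\|\pi_\chi(A_n(y))\xi_\chi-\chi(y)\xi_\chi\bigr\|.\]
Induction then gives $\pi_\chi(A_n(y)^m)\xi_\chi\to\chi(y)^m\xi_\chi$ for every $m$, and each summand of $\pi_\chi({\rm e}^{A_n(y)})\xi_\chi=\sum_{m\geq 0}\pi_\chi(A_n(y)^m)\xi_\chi/m!$ is dominated uniformly in $n$ by $2\|y\|^m/m!$, so dominated convergence term by term yields the sought convergence of the exponentials. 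The only genuine work lies in the moment identity of the middle paragraph; the rest is bookkeeping, and I expect no real obstacle beyond it.
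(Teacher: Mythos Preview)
Your proof is correct and follows essentially the same strategy as the paper: pass to the GNS triple, establish the linear convergence $\pi_\chi(A_n(y))\xi_\chi\to\chi(y)\xi_\chi$, lift this to the exponential, and finish with Cauchy--Schwarz against $\pi_\chi(x_n^*)\xi_\chi$. The only tactical differences are that the paper obtains the linear convergence by invoking the von Neumann mean ergodic theorem for the shift unitary $V$ on $\mathfrak{h}_\chi$ (and then identifies $P_V\pi_\chi(y)\xi_\chi=\chi(y)\xi_\chi$ via multiplicativity), whereas you compute $\|\pi_\chi(A_n(y'))\xi_\chi\|^2$ directly from the finitely supported correlations; and the paper lifts to the exponential in one step via the factorisation $e^{A}-1=\bigl(\sum_{m\geq0}A^m/(m{+}1)!\bigr)A$ rather than your induction plus dominated convergence.
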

\begin{proof}
 We may assume that $\chi(y)=0$. Let $(\pi_\chi, \mathfrak{h}_\chi, \xi_\chi)$ denote the GNS-triple associated with~$\chi$ and $V$ the unitary operator on $\mathfrak{h}_\chi$ given by $V\pi_\chi(x)\xi_\chi=\pi_\chi(\gamma_1(x))\xi_\chi$ for any $x\in\mathfrak{M}$. By the von Neumann ergodic theorem, $\frac{1}{|I_n|}\sum_{k\in I_n}V^k$ converges strongly to the projection $P_V$ onto the subspace of $V$-invariant vectors. Moreover, for any $x, y\in\mathfrak{M}_0$ we have
 \[\langle \pi_\chi(y)\xi_\chi, P_V\pi_\chi(x)\xi_\chi\rangle=\lim_{n\to\infty}\frac{1}{|I_n|}\sum_{k\in I_n}\chi(\gamma_k(x^*)y)=\chi(x^*)\chi(y)=\langle\pi_\chi(y)\xi_\chi, \chi(x)\xi_\chi\rangle.\]
 Since $\xi_\chi$ is cyclic for $\pi_\chi(\mathfrak{M}_0)$, we have $P_V\pi_\chi(x)\xi_\chi=\chi(x)\xi_\chi$ for any $x\in\mathfrak{M}_0$. Thus, we have
 \begin{align*}
 \Big|\chi\Big(x_n{\rm e}^{\frac{1}{|I_n|}{}\sum_{k\in I_n}\gamma_k(y)}\Big)-\chi(x_n)\Big| &=\Big|\Big\langle \pi_\chi\Big({\rm e}^{\frac{1}{|I_n|}\sum_{k\in I_n}\gamma_k(y)}-1\Big)\xi_\chi, \pi_\chi(x_n^*)\xi_\chi\Big\rangle\Big| \\
 & \leq\bigg\|\frac{1}{|I_n|}\sum_{k\in I_n}\pi_\chi(\gamma_k(y))\xi_\chi\bigg\| \\
 &\quad\times\bigg\|\sum_{m=0}^\infty\frac{1}{(m+1)!}\bigg(\frac{1}{|I_n|}\sum_{k\in I_n}\pi_\chi(\gamma_k(y))\bigg)^m \pi_\chi(x_n^*)\xi_\chi\bigg\| \\
 & \leq\bigg\|\frac{1}{|I_n|}\sum_{k\in I_n}V^k\pi_\chi(y)\xi_\chi\bigg\|{\rm e}^{\|y\|}\sup_n\|x_n\| \\
 & \to0 \qquad \text{as }n\to\infty.\tag*{\qed}
 \end{align*} \renewcommand{\qed}{}
\end{proof}

Now we prove Theorem \ref{theorem:gaussian}.
\begin{proof}[Proof of Theorem \ref{theorem:gaussian}]
 We may assume that $\chi(x_i)=0$ ($i=1,\dots, k$). If $k=1$, the assertion follows from Proposition \ref{prop:k=1}. For $1,\dots, k-1$, we assume that the statement holds true. Thus, we have
 \[\lim_{n\to\infty}\chi\big({\rm e}^{\mathrm{i}F_n(x_1)}\cdots {\rm e}^{\mathrm{i}F_n(x_{k-2})}{\rm e}^{\mathrm{i}F_n(x_{k-1}+x_k)}\big)=\varphi_\chi(w(x_1)\cdots w(x_{k-2})w(x_{k-1}+x_k)).\]
 By Lemma \ref{lem:L}, we have
 \[\lim_{n\to\infty}\big|\chi\big({\rm e}^{\mathrm{i}F_n(x_1)}\cdots {\rm e}^{\mathrm{i}F_n(x_k)}\big)-\chi\big({\rm e}^{\mathrm{i}F_n(x_1)}\cdots {\rm e}^{\mathrm{i}F_n(x_{k-2})}{\rm e}^{\mathrm{i}F_n(x_{k-1}+x_k)}{\rm e}^{-[F_n(x_{k-1}), F_n(x_k)]/2}\big)\big|=0,\]
 and $[F_n(x_{k-1}), F_n(x_k)]=\frac{1}{|I_n|}\sum_{j\in I_n}\gamma_j(\sum_{l\in\mathbb{Z}}[x_{k-1}, \gamma_l(x_k)])$ for sufficiently large $n$. Therefore, by Lemma \ref{lem:bdd_seq}, we have
 \begin{align*}
 & \lim_{n\to\infty}\chi\big({\rm e}^{\mathrm{i}F_n(x_1)}\cdots {\rm e}^{\mathrm{i}F_n(x_k)}\big), \\
 & \lim_{n\to\infty}\chi\big({\rm e}^{\mathrm{i}F_n(x_1)}\cdots {\rm e}^{\mathrm{i}F_n(x_{k-2})}{\rm e}^{\mathrm{i}F_n(x_{k-1}+x_k)}{\rm e}^{-[F_n(x_{k-1}), F_n(x_k)]/2}\big) \\
 & \qquad=\varphi_\chi(w(x_1)\cdots w(x_{k-2})w(x_{k-1}+x_k)){\rm e}^{\mathrm{i}\sigma_\chi(x_{k-1}, x_k)} \\
 &\qquad =\varphi_\chi(w(x_1)\cdots w(x_{k-1})w(x_k)).\tag*{\qed}
 \end{align*} \renewcommand{\qed}{}
\end{proof}

\subsection*{Acknowledgements}
The author gratefully acknowledges comments from Professor Makoto Katori at the early stage of this work. The author is grateful to Professor Yoshimichi Ueda for the useful discussion and his comments on the draft of this paper. Finally, the author gratefully thanks the referees for their kind reading and comments.
This work was supported by JSPS Research Fellowship for Young Scientists PD (KAKENHI Grand Number 22J00573).

\pdfbookmark[1]{References}{ref}
\LastPageEnding

\end{document}